\newtheorem{prethm}{{\bf Theorem}}
\newenvironment{thm}{\begin{prethm}{\hspace{-0.5
               em}{\bf.}}}{\end{prethm}}
\newtheorem{prepro}[prethm]{{\bf Theorem}}
\newtheorem{preprop}[prethm]{{\bf Proposition}}
\newtheorem{precor}[prethm]{{\bf Corollary}}
\newenvironment{cor}{\begin{precor}{\hspace{-0.5
               em}{\bf.}}}{\end{precor}}
\newtheorem{preconj}[prethm]{{\bf Conjecture}}
\newtheorem{preremark}[prethm]{{\bf Remark}}
\newenvironment{remark}{\begin{preremark}\rm{\hspace{-0.5
               em}{\bf.}}}{\end{preremark}}
\newtheorem{preexample}[prethm]{{\bf Example}}
\newtheorem{predefin}[prethm]{{\bf Definition}}
\newenvironment{defin}{\begin{predefin}\rm{\hspace{-0.5
               em}{\bf.}}}{\end{predefin}}
\newtheorem{preques}[prethm]{{\bf Question}}
\newtheorem{prelem}[prethm]{{\bf Lemma}}
\newenvironment{lem}{\begin{prelem}{\hspace{-0.5
               em}{\bf.}}}{\end{prelem}}
\newtheorem{prelam}{{\bf Lemma}}
\newtheorem{preproof}{{\bf Proof.}}
\newenvironment{proof}[1]{\begin{preproof}{\rm
               #1}\hfill{$\Box$}}{\end{preproof}}
\title{\bf \large  Application of some combinatorial arrays in\\ coloring of total graph of a commutative ring
\thanks
{{\it Key Words}: Zero-divisor, Chromatic number, Clique number, Cayley sum graph, Latin-sum array.}
\thanks {2010{ \it Mathematics Subject Classification}: 05B15, 05C15, 05C25, 05C69, 16N40.}}
\author{{\normalsize { G. Aalipour${}^{\mathsf{a,b}}$} and { S. Akbari${}^{\mathsf{a,b}}$}}\vspace{3mm}\\
{\footnotesize{${}^{\mathsf{a}}$ Department of Mathematical
 Sciences, Sharif University of Technology, Tehran, Iran}}\\
{\footnotesize{${}^{\mathsf{b}}$ School of Mathematics, Institute
for Research in Fundamental Sciences, (IPM),}}\\
{\footnotesize{}P.O. Box 19395-5746, Tehran, Iran}\\\\
{\footnotesize{$\mathsf{alipour\_ghodrat@mehr.sharif.ir}$\quad\quad
$\mathsf{s\_akbari@sharif.edu}$\quad\quad}}}
\date{}
\begin{document}
\maketitle
\vspace{-1cm}
\begin{abstract}
{\small Let  $R$  be a commutative ring with unity and $Z(R)$ and ${\rm Reg}(R)$ be the set of zero-divisors and non-zero zero-divisors of $R$, respectively.
We denote by $T(\Gamma(R))$, the total graph of $R$, a simple graph with the vertex set $R$ and two distinct vertices $x$ and $y$ are adjacent if and only if $x+y\in Z(R)$. The induced subgraphs on $Z(R)$ and  ${\rm Reg}(R)$ are denoted by $Z(\Gamma(R))$ and $Reg(\Gamma(R))$, respectively. These graphs were first introduced by D.F. Anderson and A. Badawi in $2008$. In this paper, we prove the following result: let $R$ be a finite ring and one of the following conditions hold: (i) The residue field of $R$ of minimum size has even characteristic, (ii) Every residue field of $R$ has odd characteristic and $\frac{R}{J(R)}$ has no summand isomorphic to $\mathbb{Z}_3\times \mathbb{Z}_3$,
then the chromatic number and clique number of $T(\Gamma(R))$ are equal to $\max\{|\mathfrak{m}|\,:\, \mathfrak{m}\in {\rm Max}(R)\}$. The same result holds for $Z(\Gamma(R))$. Moreover, if the residue field of $R$ of minimum size has even characteristic or every residue field of $R$ has odd characteristic, then we determine
the chromatic number and clique number of $Reg(\Gamma(R))$ as well.}
\end{abstract}

\vspace{4mm} \noindent{\bf\large 1. Introduction}\vspace{4mm}\\
{Throughout this paper, all rings are assumed to be
commutative with unity. Let $R$ be a ring. We denote by $U(R)$, $Z(R)$, $Z^*(R)$, ${\rm Reg}(R)$, ${\rm{Min}}(R)$, ${\rm{Spec}}(R)$ and ${\rm{Max}}(R)$,
the set of invertible elements, zero-divisors, non-zero zero-divisors, regular elements, minimal prime ideals,
prime ideals and maximal ideals of $R$,
respectively. The {\it{Jacobson radical}} and the {\it{nilradical}}
of $R$ are denoted by $J(R)$ and $Nil(R)$, respectively. The ring
$R$ is said to be \textit{reduced} if it has no non-zero nilpotent
element.  The \textit{Krull dimension} of $R$ is denoted by $dim(R)$.
A {\it{local ring}} is a ring with exactly one maximal ideal. A ring with finitely many maximal ideals is called a {\it{semi-local ring}}. The set of \textit{associated prime ideals of an $R$-module $R$} is denoted by ${\rm Ass}(R)=\{\mathfrak{p}\in {\rm{Spec}}(R)\, :\, \mathfrak{p}=Ann(x),\ \text{for some}\ x\in R\}$. For classical theorems and notations in commutative algebra, the interested reader is referred to \cite{ati} and \cite{kaplansky}.

Let $G$ be a graph with the vertex set $V(G)$. The \textit{complement} of $G$ is denoted by $\overline{G}$. If $G$ is connected, then we mean by  $diam(G)$, the {\textit{diameter}} of $G$. If $G$ is not connected, then $diam(G)$ is defined to be $\infty$. We denote by $K_X$ and $K_{X,Y}$, the \textit{complete graph} with the vertex set $X$ and the \textit{complete bipartite graph} with two parts $X$ and $Y$, respectively. The \textit{direct product} (sometimes called \textit{Kronecker product} or \textit{tensor product}) of two graphs $G$ and $H$, denoted by $G\times H$, is a graph with the vertex set $V(G)\times V(H)$ and two distinct vertices $(x_1,y_1)$ and $(x_2,y_2)$ are adjacent if and only if $x_1$ and $x_2$ are adjacent in $G$ and $y_1$ and $y_2$ are adjacent in $H$. A \textit{clique} in a graph $G$ is a subset of pairwise adjacent vertices and the supremum of the size of cliques in $G$, denoted by
$\omega(G)$, is called the \textit{clique number} of $G$. By
$\chi(G)$, we denote \textit{the chromatic number} of $G$, i.e. the
minimum number of colors which can be assigned to the vertices of
$G$ in such a way that every two adjacent vertices have different
colors.
A coloring of the vertices that any two adjacent vertices have different colors, is called a \textit{proper vertex coloring}.
Let $r\leq n$ be two positive integers. A \textit{Latin rectangle} is an $r\times n$ matrix whose entries are $n$ distinct symbol with no symbol occurring more than once in any row or column. 

For a commutative ring $R$, the \textit{total graph} of $R$, denoted by $T(\Gamma(R))$, is a graph with the vertex set $R$ and two distinct vertices $x$ and $y$ are adjacent if and only if $x+y\in Z(R)$. This is the \textit{Cayley sum graph} (or sometimes is called \textit{addition Cayley graph}) $Cay^{+}(R,Z(R))$.
For more information on Cayley sum graphs see \cite{cayley-sum1}, \cite{cayley-sum2} and the other references there.
The authors in \cite{ak} and \cite{totalgraph} have studied $T(\Gamma(R))$ and two its subgraphs $Reg(\Gamma(R))$ and $Z(\Gamma(R))$, the induced subgraphs of $T(\Gamma(R))$ on $Reg(R)$ and $Z(R)$, respectively. Some other properties of $Reg(\Gamma(R))$ are studied in \cite{akbari-heydari}. The authors in \cite{akbari-heydari}, characterize all rings $R$ such that $2\notin Z(R)$ and $Reg(\Gamma(R))$ is a complete graph. They also prove that if $Reg(\Gamma(R))$ is a tree, then it has at most two vertices. In this paper, we investigate the clique number and the chromatic number of $T(\Gamma(R))$ and its two subgraphs $Z(\Gamma(R))$ and $Reg(\Gamma(R))$ as well.

\vspace{4mm} \noindent{\bf\large 2. Preliminaries on the Total Graph of a Ring}\vspace{4mm}\\
In this section, for a commutative ring $R$, we provide some preliminary lemmas on $T(\Gamma(R))$.

\begin{lem}\label{zerodivisorsofzero-dimensionals}{\rm (\cite[Theorem 91]{kaplansky})}
Let $R$ be a zero-dimensional ring. Then $Z(R)$ is the union of all maximal ideals of $R$. In particular, $Reg(R)=U(R)$.
\end{lem}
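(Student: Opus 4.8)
The plan is to establish the equality $Z(R)=\bigcup_{\mathfrak{m}\in {\rm Max}(R)}\mathfrak{m}$ by proving the two inclusions separately, and then to read off the statement about regular elements. One inclusion holds in every commutative ring and needs no hypothesis: a zero-divisor can never be a unit, so $Z(R)\subseteq R\setminus U(R)$; and since by Zorn's lemma every non-unit lies in some maximal ideal while every element of a maximal ideal is a non-unit, we have $R\setminus U(R)=\bigcup_{\mathfrak{m}\in {\rm Max}(R)}\mathfrak{m}$. Combining these gives $Z(R)\subseteq \bigcup_{\mathfrak{m}\in {\rm Max}(R)}\mathfrak{m}$ for free.

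The substance lies in the reverse inclusion, which is where zero-dimensionality enters. The key step I would isolate is the claim that \emph{every minimal prime ideal of $R$ is contained in $Z(R)$}. Granting this, the argument finishes quickly: since $dim(R)=0$, every prime ideal of $R$ is simultaneously maximal and minimal, so each $\mathfrak{m}\in {\rm Max}(R)$ is a minimal prime and hence $\mathfrak{m}\subseteq Z(R)$; taking the union over all maximal ideals yields $\bigcup_{\mathfrak{m}\in {\rm Max}(R)}\mathfrak{m}\subseteq Z(R)$, completing the equality.

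To prove the claim, let $\mathfrak{p}$ be a minimal prime and pass to the localization $R_{\mathfrak{p}}$. The primes of $R_{\mathfrak{p}}$ correspond to the primes of $R$ contained in $\mathfrak{p}$, and minimality of $\mathfrak{p}$ forces $\mathfrak{p}R_{\mathfrak{p}}$ to be the unique prime of $R_{\mathfrak{p}}$; consequently $\mathfrak{p}R_{\mathfrak{p}}=Nil(R_{\mathfrak{p}})$ consists entirely of nilpotents. Now fix $x\in \mathfrak{p}$. Then $x/1$ is nilpotent in $R_{\mathfrak{p}}$, so $x^n/1=0$ for some $n\geq 1$, which means $sx^n=0$ for some $s\notin \mathfrak{p}$. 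If $x=0$ there is nothing to prove; otherwise, noting that $s\neq 0$ (as $\mathfrak{p}$ is proper) and choosing the largest $k\geq 0$ with $sx^k\neq 0$, we get $(sx^k)x=sx^{k+1}=0$ with $sx^k\neq 0$, so $x\in Z(R)$. This establishes the claim.

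Finally, the ``in particular'' is immediate: $Reg(R)=R\setminus Z(R)=R\setminus \bigcup_{\mathfrak{m}\in {\rm Max}(R)}\mathfrak{m}=U(R)$, the last equality being the identity $R\setminus U(R)=\bigcup_{\mathfrak{m}\in {\rm Max}(R)}\mathfrak{m}$ already used. I expect the only genuine obstacle to be the minimal-prime claim; everything else is formal. The delicate point there is producing an annihilator of $x$ itself, rather than merely of some power $x^n$, from the relation $sx^n=0$, which is precisely what the ``largest $k$'' trick accomplishes.
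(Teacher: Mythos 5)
Your proof is correct. Note that the paper gives no argument of its own here: the lemma is quoted verbatim from Kaplansky (Theorem 91), so there is nothing internal to compare against. Your write-up is the standard proof of that theorem: the inclusion $Z(R)\subseteq\bigcup_{\mathfrak{m}}\mathfrak{m}$ is the trivial observation that zero-divisors are non-units, and the reverse inclusion reduces, via the fact that in a zero-dimensional ring every maximal ideal is a minimal prime, to the general lemma that minimal primes consist of zero-divisors; your localization argument for that lemma, including the ``largest $k$'' device to pass from $sx^n=0$ to a genuine annihilator of $x$, is exactly right. The only cosmetic quibble is the parenthetical ``as $\mathfrak{p}$ is proper'': the reason $s\neq 0$ is simply that $0\in\mathfrak{p}$ while $s\notin\mathfrak{p}$.
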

\begin{lem}\label{shiftingbynilpotents}
Let $R$ be a zero-dimensional ring, $x\in R$ and $a\in Nil(R)$. Then $x+a\in Z(R)$ if and only if $x\in Z(R)$.
\end{lem}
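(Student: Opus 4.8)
The plan is to reduce the claim entirely to the structural description of $Z(R)$ established in Lemma \ref{zerodivisorsofzero-dimensionals}. Since $R$ is zero-dimensional, that lemma gives $Z(R)=\bigcup_{\mathfrak{m}\in {\rm Max}(R)}\mathfrak{m}$, so membership in $Z(R)$ is equivalent to membership in at least one maximal ideal. The whole problem therefore becomes: how does adding a nilpotent element affect membership in the maximal ideals of $R$?

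First I would record the elementary fact that every nilpotent element lies in every prime ideal, and in particular in every maximal ideal; equivalently $Nil(R)\subseteq J(R)=\bigcap_{\mathfrak{m}\in{\rm Max}(R)}\mathfrak{m}$. Hence for the given $a\in Nil(R)$ we have $a\in\mathfrak{m}$ for every $\mathfrak{m}\in{\rm Max}(R)$. This is the only ring-theoretic input beyond the previous lemma.

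The key step is then the observation that for each fixed maximal ideal $\mathfrak{m}$, since $a\in\mathfrak{m}$ we get $x\equiv x+a \pmod{\mathfrak{m}}$, so $x\in\mathfrak{m}$ if and only if $x+a\in\mathfrak{m}$. Both implications now fall out by taking the union over all maximal ideals: if $x\in Z(R)$ then $x\in\mathfrak{m}$ for some $\mathfrak{m}$, whence $x+a\in\mathfrak{m}\subseteq Z(R)$; conversely if $x+a\in Z(R)$ then $x+a\in\mathfrak{m}$ for some $\mathfrak{m}$, whence $x=(x+a)-a\in\mathfrak{m}\subseteq Z(R)$.

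Because the congruence is symmetric in $x$ and $x+a$, there is essentially no serious obstacle; the only point requiring care is that $a$ must lie in \emph{every} maximal ideal, so that the same $\mathfrak{m}$ witnessing one side automatically works for the other, and this is exactly what $a\in Nil(R)$ guarantees. The zero-dimensionality hypothesis is used only through Lemma \ref{zerodivisorsofzero-dimensionals}, which is what licenses identifying $Z(R)$ with the union of the maximal ideals rather than with some larger or more complicated set.
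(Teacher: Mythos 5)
Your proof is correct, but it runs through a different clause of Lemma \ref{zerodivisorsofzero-dimensionals} than the paper does. The paper's argument works with the complement: it notes the elementary fact that $u\in U(R)$ if and only if $u+a\in U(R)$ (adding a nilpotent preserves invertibility), and then invokes the ``in particular'' part of Lemma \ref{zerodivisorsofzero-dimensionals}, namely $Reg(R)=U(R)$, so that $Z(R)=R\setminus U(R)$ is likewise stable under adding $a$. You instead use the union description $Z(R)=\bigcup_{\mathfrak{m}\in{\rm Max}(R)}\mathfrak{m}$ together with $Nil(R)\subseteq J(R)$, so that $a$ lies in every maximal ideal and membership in each fixed $\mathfrak{m}$ is unchanged by translating by $a$. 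Both are one-step reductions to Lemma \ref{zerodivisorsofzero-dimensionals} and both are complete; the paper's version has the small advantage of needing only the fact that $1+(\text{nilpotent})$ is a unit, while yours makes the mechanism more transparent (the witnessing maximal ideal is literally preserved) and would transfer verbatim to any situation where $Z(R)$ is a union of ideals each containing $a$. There is no gap in your argument.
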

\begin{proof}
{Let $u$ be an arbitrary element of $R$. It is clear that $u\in U(R)$ if and only if $u+a\in U(R)$. Since $Z(R)=R\setminus Reg(R)$,  the assertion immediately follows from Lemma \ref{zerodivisorsofzero-dimensionals}.}
\end{proof}
\begin{lem}\label{zerodivisorsofreducedsareshifted}
Let $R$ be a zero-dimensional ring and $x\in R$. Then $x+Nil(R)\in Z(\frac{R}{Nil(R)})$ if and only if $x\in Z(R)$.
\end{lem}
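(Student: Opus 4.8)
The plan is to mirror the proof of Lemma \ref{shiftingbynilpotents}: rather than manipulating zero-divisors directly, I would reduce the statement to a comparison of \emph{units} across the canonical quotient map $R \to R/Nil(R)$, and then convert back to zero-divisors using the identity $Z(R) = R \setminus Reg(R)$. This is efficient because Lemma \ref{zerodivisorsofzero-dimensionals} supplies, for any zero-dimensional ring, the clean equality $Reg = U$; hence once I control units, the conclusion about $Z$ follows automatically in both $R$ and its quotient.

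Concretely, write $\overline{u}$ for the image of $u\in R$ in $R/Nil(R)$. The central claim is that for every $u \in R$ one has $u \in U(R)$ if and only if $\overline{u} \in U(R/Nil(R))$. The forward implication is immediate. For the converse, if $\overline{u}$ is a unit then there exists $v \in R$ with $uv - 1 \in Nil(R)$; since $Nil(R) \subseteq J(R)$ and elements of $1 + J(R)$ are invertible, $uv \in 1 + J(R) \subseteq U(R)$, which forces $u \in U(R)$. This is exactly the analogue of the observation ``$u \in U(R)$ if and only if $u+a \in U(R)$'' that drives Lemma \ref{shiftingbynilpotents}.

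With the claim in hand I would finish as follows. By Lemma \ref{zerodivisorsofzero-dimensionals} applied to $R$ we have $Z(R) = R \setminus U(R)$. To obtain the same description for the quotient, I first record that $R/Nil(R)$ is again zero-dimensional: every prime of $R/Nil(R)$ pulls back to a prime of $R$ containing $Nil(R)$, and all such primes are maximal since $\dim R = 0$, so $\dim(R/Nil(R)) = 0$ as well. Lemma \ref{zerodivisorsofzero-dimensionals} then yields $Z(R/Nil(R)) = (R/Nil(R)) \setminus U(R/Nil(R))$. Combining this with the claim gives the chain $x \in Z(R)$ iff $x \notin U(R)$ iff $\overline{x} \notin U(R/Nil(R))$ iff $\overline{x} \in Z(R/Nil(R))$, which is precisely the asserted equivalence.

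The only point genuinely requiring attention — the \textbf{main obstacle}, though a mild one — is the verification that $R/Nil(R)$ inherits zero-dimensionality, since Lemma \ref{zerodivisorsofzero-dimensionals} (and its corollary $Reg = U$) may only be invoked for zero-dimensional rings; without it one cannot pass from ``$\overline{x}$ is a non-unit'' to ``$\overline{x}$ is a zero-divisor'' in the quotient. An alternative route avoiding units entirely would identify $Z(R)$ with the union of the maximal ideals via Lemma \ref{zerodivisorsofzero-dimensionals} and exploit the inclusion-preserving correspondence between ${\rm Max}(R)$ and ${\rm Max}(R/Nil(R))$, valid because $Nil(R)$ lies in every maximal ideal; but the unit-based argument is shorter and parallels the preceding lemma most closely.
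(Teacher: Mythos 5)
Your proof is correct, but it runs along a slightly different track from the paper's. You reduce everything to units: you prove that $u\in U(R)$ if and only if $\overline{u}\in U(R/Nil(R))$ (lifting units modulo the nilradical, via $Nil(R)\subseteq J(R)$ and the invertibility of $1+J(R)$), check that $R/Nil(R)$ is again zero-dimensional, and then invoke Lemma \ref{zerodivisorsofzero-dimensionals} in the form $Z=R\setminus U$ on both sides. The paper instead uses the other half of Lemma \ref{zerodivisorsofzero-dimensionals}: it notes that $Nil(R)=J(R)$ in a zero-dimensional ring, so ${\rm Max}(R/Nil(R))=\{\mathfrak{m}/Nil(R)\,:\,\mathfrak{m}\in{\rm Max}(R)\}$ with $x\in\mathfrak{m}$ iff $\overline{x}\in\mathfrak{m}/Nil(R)$, and then identifies $Z$ with the union of the maximal ideals in both $R$ and its quotient. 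Your unit-based argument is the natural continuation of the proof of Lemma \ref{shiftingbynilpotents} and has the small advantage that the key lifting step ($uv-1$ nilpotent implies $uv$ invertible) needs nothing about dimension; the paper's ideal-correspondence argument is marginally more self-contained in that it never leaves the language of ideals. Both correctly flag the one point that must be checked, namely that $R/Nil(R)$ is zero-dimensional so that Lemma \ref{zerodivisorsofzero-dimensionals} applies to the quotient, and you even record the paper's route as your stated alternative. No gaps.
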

\begin{proof}
{Since $R$ is zero-dimensional, we have $Nil(R)=J(R)$ and ${\rm Max}(\frac{R}{Nil(R)})=\{\,\frac{\mathfrak{m}}{Nil(R)}\,|\, \mathfrak{m}\in {\rm Max}(R)\}$. Thus, for a maximal ideal $\mathfrak{m}$ of $R$, we have $x\in \mathfrak{m}$ if and only if $x+Nil(R)\in \frac{\mathfrak{m}}{Nil(R)}$. 
Since $\frac{R}{Nil(R)}$ is zero-dimensional, by Lemma \ref{zerodivisorsofzero-dimensionals}, we conclude that $x+Nil(R)\in Z(\frac{R}{Nil(R)})$ if and only if $x\in Z(R)$. The proof is complete.}
\end{proof}
\begin{lem}\label{zerodivisorsareideal}{\rm (\cite[Theorems 2.1 and 2.2]{totalgraph})}
Let $R$ be a ring and $Z(R)$ be an ideal of $R$. Then the following statements hold.

\noindent {\rm{(i)}} If $2\in Z(R)$, then $T(\Gamma(R))$ is a disjoint union of $|\frac{R}{Z(R)}|$ complete graphs $K_{|Z(R)|}$ on a coset of $Z(R)$.\\
{\rm{(ii)}} If $2\notin Z(R)$, then $T(\Gamma(R))$ is a disjoint union of $K_{|Z(R)|}$ and $\frac{|{R}/{Z(R)}|-1}{2}$ complete bipartite graphs $K_{|Z(R)|,|Z(R)|}$.
\end{lem}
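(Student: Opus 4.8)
The plan is to push everything down to the quotient ring $R/Z(R)$ via the natural projection $\pi\colon R\to R/Z(R)$, writing $\bar{x}=x+Z(R)$. Since $Z(R)$ is assumed to be an ideal, for distinct vertices $x,y$ the adjacency condition $x+y\in Z(R)$ is equivalent to $\bar{x}=-\bar{y}$ in $R/Z(R)$. Thus the whole edge structure of $T(\Gamma(R))$ is governed by the involution $\sigma\colon\bar{a}\mapsto-\bar{a}$ on $R/Z(R)$: an edge can only join the fibre $\pi^{-1}(\bar{a})$ to the fibre $\pi^{-1}(-\bar{a})$, and each such fibre is a coset of size $|Z(R)|$. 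I would record at the outset that no edge ever runs between the pair $\pi^{-1}(\bar{a})\cup\pi^{-1}(-\bar{a})$ and a fibre lying over a different $\sigma$-orbit, so that $T(\Gamma(R))$ splits as a disjoint union indexed by the orbits of $\sigma$, and it remains only to identify each orbit's induced subgraph.

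For part (i), when $2\in Z(R)$ I would observe that $2\bar{y}=\pi(2y)=\bar{0}$ for every $y$, because $2y\in Z(R)$; hence $-\bar{y}=\bar{y}$, and the adjacency condition collapses to $\bar{x}=\bar{y}$. Consequently two distinct vertices are adjacent exactly when they lie in the same coset of $Z(R)$: each coset induces a complete graph $K_{|Z(R)|}$ and there are no edges across distinct cosets, yielding the disjoint union of $|R/Z(R)|$ copies of $K_{|Z(R)|}$.

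For part (ii), when $2\notin Z(R)$, the step I expect to carry the real content is showing that $\sigma$ has $\bar{0}$ as its only fixed point. To see this, suppose $\bar{a}=-\bar{a}$, i.e. $2a\in Z(R)$; then $2(ac)=0$ for some nonzero $c$, and were $ac\neq 0$ this would force $2\in Z(R)$, a contradiction, so $ac=0$ and hence $a\in Z(R)$, giving $\bar{a}=\bar{0}$. (Equivalently, an ideal $Z(R)$ is necessarily prime, so $R/Z(R)$ is an integral domain in which $\bar{2}\neq\bar{0}$ is a non-zero-divisor.) With this in hand, the fibre over $\bar{0}$ is the coset $Z(R)$ itself, on which every pair of distinct vertices is adjacent, producing one copy of $K_{|Z(R)|}$. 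Every other $\sigma$-orbit is a pair $\{\bar{a},-\bar{a}\}$ of distinct nonzero cosets; inside a single such fibre there are no edges (since $\bar{a}\neq-\bar{a}$), while every vertex over $\bar{a}$ is joined to every vertex over $-\bar{a}$, giving a complete bipartite graph $K_{|Z(R)|,|Z(R)|}$. Since $\sigma$ acts on the $|R/Z(R)|-1$ nonzero cosets as a fixed-point-free involution, these pair off into exactly $\frac{|R/Z(R)|-1}{2}$ bipartite components, which together with the single $K_{|Z(R)|}$ completes the proof.
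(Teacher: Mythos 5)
Your proof is correct. Note that the paper does not prove this lemma at all --- it is quoted verbatim from Anderson and Badawi's Theorems 2.1 and 2.2 --- and your argument (reduce adjacency to the involution $\bar{a}\mapsto-\bar{a}$ on $R/Z(R)$, observing that an ideal consisting of zero-divisors is prime, so that $\bar{0}$ is the only fixed point when $2\notin Z(R)$) is essentially the standard one given in that source.
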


\vspace{4mm} \noindent{\bf\large 3. Coloring of the total graph of a ring}\vspace{4mm}\\
In this section we study the chromatic number of the total graph of a finite commutative ring.  We conjecture that for every finite ring $R$, the following equalities hold:
\begin{equation}\label{identity-conjecture}
\chi(T(\Gamma(R)))=\omega(T(\Gamma(R)))=
  \begin{cases}
     4\,;      & R\cong \mathbb{Z}_3\times \mathbb{Z}_3,\,\, \\
    \max\{|\mathfrak{m}|\,:\, \mathfrak{m}\in {\rm Max}(R)\};       &{\rm otherwise}.\
\end{cases}
\end{equation}

\begin{remark}
In order to determine the clique number and the chromatic number of $T(\Gamma(R))$, we assume that $R$ is a finite ring because if $R$ is an infinite ring which is not a domain, then for every non-zero zero-divisor $x$ of $R$, $\frac{R}{Ann(x)}\cong Rx$, as $R$-modules. Since both $Rx$ and $Ann(x)$ form a clique for $Z(\Gamma(R))$, we deduce that $Z(\Gamma(R))$ has an infinite clique and so $T(\Gamma(R))$ does.
\end{remark}
In the sequel, we prove (\ref{identity-conjecture}) for many families of rings. In this direction, first we reduce the problem of coloring of the total graph of a finite
ring to the problem of coloring of total graph of a finite reduced ring.

\begin{defin}
Let $G$ be a graph, $V(G)=\{v_1,\ldots,v_n\}$, $H_{1},\ldots,H_{n}$ be $n$ graphs of order $m$. Let $G(H_{1},\ldots,H_{n})$ be a graph with the vertex set $\cup_{i=1}^nV(H_{i})$ such that:

\noindent (i) Any vertex of $H_{i}$ is adjacent to any vertex of $H_{j}$ if and only if $v_i$ is adjacent to $v_j$ in $G$.\\
(ii) Two vertices of $H_{i}$ are adjacent in $G(H_{1},\ldots,H_{n})$ if and only if they are adjacent in $H_{i}$.

If for every $i$, $1\leq i\leq n$, $H_{i}$ has no edge, then the resulting graph is called a \textit{balanced blow-up}. If for every $i$, $H_{i}\cong K_m$, then we denote $G(H_{1},\ldots,H_{n})$ by $\widehat{G}(m)$. We state the following simple lemma without proof.
\end{defin}

\begin{lem}\label{like-blow-up}
Let $G$ be a graph and $H_1,\ldots,H_n$ be simple graphs of order $m$. Then
$$\chi\big(G(H_{1},\ldots,H_{n})\big)\leq m\chi(G).$$
\end{lem}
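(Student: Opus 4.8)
The plan is to exhibit an explicit proper coloring of $G(H_{1},\ldots,H_{n})$ that uses at most $m\chi(G)$ colors, from which the inequality is immediate. Set $k=\chi(G)$ and fix a proper vertex coloring $c\colon V(G)\to\{1,\ldots,k\}$. Since each $H_{i}$ has exactly $m$ vertices, label them $w_{i,1},\ldots,w_{i,m}$. I would use the color palette $\{1,\ldots,k\}\times\{1,\ldots,m\}$, which has $km=m\chi(G)$ elements, and assign to the vertex $w_{i,j}$ the ordered pair $\bigl(c(v_i),j\bigr)$. The idea is that the two coordinates decouple the two kinds of edges present in $G(H_{1},\ldots,H_{n})$: the first coordinate, inherited from the coloring of $G$, handles edges running between distinct copies, while the second coordinate, which is injective on each copy, handles edges internal to a single copy.

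To verify properness I would split into two cases. First, consider two distinct vertices $w_{i,j}$ and $w_{i,j'}$ of a fixed copy $H_{i}$. Here $j\ne j'$, so their colors $\bigl(c(v_i),j\bigr)$ and $\bigl(c(v_i),j'\bigr)$ differ in the second coordinate; in particular this holds whenever they are adjacent in $H_{i}$. Second, consider vertices $w_{i,j}$ and $w_{i',j'}$ lying in distinct copies, so $i\ne i'$. By condition (i) of the definition of $G(H_{1},\ldots,H_{n})$, such a pair is adjacent only if $v_i$ and $v_{i'}$ are adjacent in $G$; since $c$ is proper this forces $c(v_i)\ne c(v_{i'})$, so the two colors differ in the first coordinate. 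Hence every edge of $G(H_{1},\ldots,H_{n})$ joins vertices of different color, and the coloring is proper.

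Counting the palette then gives $\chi\bigl(G(H_{1},\ldots,H_{n})\bigr)\le km=m\chi(G)$, as claimed. There is no genuine obstacle in this argument: the only point requiring a little care is the bookkeeping in the two-case verification, and the role played by the hypothesis $|V(H_{i})|=m$, which is exactly what guarantees that the second coordinate can be chosen injectively on each copy. One does not even need $\chi(H_{i})$ here, since the trivial estimate $\chi(H_{i})\le m$ already suffices; this is precisely why the lemma can safely be recorded without proof.

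I would remark that the same construction shows a sharper statement, namely $\chi\bigl(G(H_{1},\ldots,H_{n})\bigr)\le\chi(G)\cdot\max_i\chi(H_{i})$, obtained by reusing second-coordinate values within each copy according to a proper coloring of $H_{i}$ rather than wasting a fresh value on every vertex. For the purposes of the paper, however, the clean bound $m\chi(G)$ with the uniform palette of size $km$ is all that is needed, and it is the form in which the lemma will later be applied to balanced blow-ups and to the graphs $\widehat{G}(m)$.
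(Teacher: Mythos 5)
Your argument is correct: the product coloring $w_{i,j}\mapsto(c(v_i),j)$ is proper because internal edges of a copy are separated by the second coordinate and cross edges force $c(v_i)\neq c(v_{i'})$ by condition (i) of the definition. The paper states this lemma without proof, and your construction is exactly the standard argument it implicitly relies on; your sharper remark $\chi(G)\cdot\max_i\chi(H_i)$ is also valid.
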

\begin{lem}\label{coloring-reduction-to-reduced-rings}
If {\rm(\ref{identity-conjecture})} holds for every finite reduced ring, then it holds for every finite ring.
\end{lem}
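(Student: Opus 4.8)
The plan is to reduce an arbitrary finite ring $R$ to its associated reduced ring $\frac{R}{Nil(R)}$, showing that the clique and chromatic numbers of $T(\Gamma(R))$ are determined by those of $T(\Gamma(\frac{R}{Nil(R)}))$ together with the size of the nilradical. The key structural observation is that the natural projection $\pi: R \to \frac{R}{Nil(R)}$ partitions $R$ into $|\frac{R}{Nil(R)}|$ fibers, each a coset of $Nil(R)$ of size $|Nil(R)|$, and that $T(\Gamma(R))$ is precisely a ``$K_m$-blow-up'' of $T(\Gamma(\frac{R}{Nil(R)}))$ in the sense of the preceding definition, with $m = |Nil(R)|$.

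\textbf{Step 1: Identify the blow-up structure.} A finite ring is zero-dimensional, so Lemma \ref{shiftingbynilpotents} applies. First I would check that adjacency in $T(\Gamma(R))$ respects the fiber structure: for $x, y \in R$, whether $x + y \in Z(R)$ depends only on the cosets $x + Nil(R)$ and $y + Nil(R)$. Indeed, by Lemma \ref{shiftingbynilpotents}, $x + y \in Z(R)$ if and only if $x' + y' \in Z(R)$ for any $x', y'$ in the same respective cosets, since they differ by a nilpotent element. Combined with Lemma \ref{zerodivisorsofreducedsareshifted}, this shows that two fibers are ``adjacent'' exactly when the corresponding vertices of $T(\Gamma(\frac{R}{Nil(R)}))$ are adjacent. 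Moreover, within a single fiber $x + Nil(R)$, any two distinct elements $x + a, x + b$ satisfy $(x+a) + (x+b) = 2x + (a+b)$, so they are adjacent in $T(\Gamma(R))$ if and only if $2x \in Z(R)$ (again using that $a + b$ is nilpotent). This means each fiber is either a complete graph $K_m$ (when $2x \in Z(R)$, equivalently when $\pi(x)$ is a loop-vertex of the Cayley sum graph) or an edgeless graph, so $T(\Gamma(R))$ is of the form $G(H_1, \ldots, H_n)$ with $G = T(\Gamma(\frac{R}{Nil(R)}))$ and each $H_i \cong K_m$ or edgeless.

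\textbf{Step 2: Bound the chromatic number.} Using Lemma \ref{like-blow-up}, I get $\chi(T(\Gamma(R))) \leq m \cdot \chi(T(\Gamma(\frac{R}{Nil(R)})))$ where $m = |Nil(R)|$. Assuming the conjectured identity holds for the reduced ring $\frac{R}{Nil(R)}$, I would substitute the right-hand side value. The goal is to show the maximal-ideal size $\max\{|\mathfrak{m}|\}$ for $R$ equals $m$ times the corresponding quantity for $\frac{R}{Nil(R)}$; this follows because maximal ideals of $R$ correspond bijectively to maximal ideals of $\frac{R}{Nil(R)}$ via $\mathfrak{m} \mapsto \frac{\mathfrak{m}}{Nil(R)}$, and each such $\mathfrak{m}$ contains $Nil(R)$, giving $|\mathfrak{m}| = m \cdot |\frac{\mathfrak{m}}{Nil(R)}|$. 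The exceptional case $\frac{R}{Nil(R)} \cong \mathbb{Z}_3 \times \mathbb{Z}_3$ must be tracked through the multiplication by $m$ as well.

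\textbf{Step 3: Match the clique number as a lower bound.} For the reverse inequality I would exhibit a clique in $T(\Gamma(R))$ of the target size, by lifting a maximum clique of $T(\Gamma(\frac{R}{Nil(R)}))$. Given a clique on vertices of the reduced graph, the union of their full fibers forms a clique in $T(\Gamma(R))$ provided each lifted fiber is internally complete and fibers over adjacent reduced-vertices are mutually complete. The between-fiber completeness is immediate from Step 1; the internal completeness requires that each reduced-vertex in the chosen clique be a loop-vertex, i.e. $2x \in Z(R)$. \textbf{The main obstacle} is precisely this compatibility: a maximum clique of the reduced graph need not consist entirely of loop-vertices, so the naive fiber-lift may fail to be complete inside some fibers. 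I expect to resolve this by analyzing the two cases of Lemma \ref{zerodivisorsareideal}-type structure (whether $2 \in Z(R)$ or not): when $2 \in Z(R)$ every element satisfies $2x \in Z(R)$ so all fibers are complete and the lift works verbatim; when $2 \notin Z(R)$ a more careful selection of representatives within the reduced clique is needed, but since the reduced-ring identity is assumed, the extremal clique structure there is explicit enough to carry out the lift and pin down equality $\chi = \omega = \max\{|\mathfrak{m}|\}$ (with the $\mathbb{Z}_3 \times \mathbb{Z}_3$ exception rescaled by $m$), completing the reduction.
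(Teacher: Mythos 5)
Your Step 1 and the upper bound in Step 2 match the paper: for a finite ring $Nil(R)=J(R)$, each coset of $J(R)$ induces either $K_{|J(R)|}$ or an edgeless graph according to whether $2f_i\in Z(R)$, and Lemma \ref{like-blow-up} gives $\chi(T(\Gamma(R)))\leq |J(R)|\,\chi(T(\Gamma(\frac{R}{J(R)})))$. But there are two problems.

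The serious gap is your treatment of the exceptional case. When $\frac{R}{J(R)}\cong\mathbb{Z}_3\times\mathbb{Z}_3$ and $|J(R)|\geq 2$, the ring $R$ is \emph{not} isomorphic to $\mathbb{Z}_3\times\mathbb{Z}_3$, so statement (\ref{identity-conjecture}) requires $\chi=\omega=\max\{|\mathfrak{m}|\}=3|J(R)|$ --- not ``$4$ rescaled by $m$'' as your closing parenthetical suggests. The blow-up bound only yields $4|J(R)|$, which is strictly too large, so the exception does not simply ``track through the multiplication by $m$''; claiming $4m$ would in fact contradict the very statement you are proving. The paper closes this case with a bespoke coloring: the four cosets $(\pm1,\pm1)+J(R)$ are internally edgeless (since $2(\pm1,\pm1)\notin Z(R)$) and form a complete $4$-partite graph, so each can be colored with a single color reused from the $3|J(R)|$ colors already spent on the cosets of $(0,0)$, $(0,\pm1)$, $(\pm1,0)$. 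Without some such construction your reduction does not establish (\ref{identity-conjecture}) for these rings.

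Separately, your Step 3 is both incomplete (``I expect to resolve this by\ldots'') and unnecessary. The lower bound $\omega(T(\Gamma(R)))\geq\max\{|\mathfrak{m}|\}$ needs no lifting of cliques from the reduced graph: by Lemma \ref{zerodivisorsofzero-dimensionals} every maximal ideal $\mathfrak{m}$ satisfies $\mathfrak{m}\subseteq Z(R)$, and since $\mathfrak{m}$ is closed under addition, $\mathfrak{m}$ is itself a clique of size $|\mathfrak{m}|$ in $T(\Gamma(R))$. The loop-vertex compatibility issue you raise simply does not arise.
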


\begin{proof}
{Let $R$ be a finite ring, $|\frac{R}{J(R)}|=k$ and $\frac{R}{J(R)}=\{f_1+J(R),\ldots,f_k+J(R)\}$. For $1\leq i\leq k$, we define $H_{i}$ as $K_{|J(R)|}$ if $2f_i\in Z(R)$ and as $\overline{K_{|J(R)|}}$, otherwise. By Lemma \ref{zerodivisorsofreducedsareshifted},
$$T(\Gamma(R))\cong T\big(\Gamma(\frac{R}{J(R)})\big)\big(H_{1},\ldots,H_{k}\big).$$
Thus, by Lemma \ref{like-blow-up},
\begin{equation}\label{blowup-total-graph}
\chi\Big(T\big(\Gamma(R)\big)\Big)\leq |J(R)|\chi\Big(T\big(\Gamma(\frac{R}{J(R)})\big)\Big).
\end{equation}

Now, we are ready to prove the assertion. Suppose that (\ref{identity-conjecture}) holds for every finite reduced ring and let $R$ be a finite ring such that $\frac{R}{J(R)}$ is not isomorphic to $\mathbb{Z}_3\times \mathbb{Z}_3$. Since $$\chi(T(\Gamma(\frac{R}{J(R)})))=\max\{|\mathfrak{m}|\,:\, \mathfrak{m}\in {\rm Max}(\frac{R}{J(R)})\},$$
by (\ref{blowup-total-graph}), we obtain that
$$\chi(T(\Gamma(R)))\leq \max\{|\mathfrak{m}|\,:\, \mathfrak{m}\in {\rm Max}(R)\}.$$
On the other hand, by Lemma \ref{zerodivisorsofzero-dimensionals}, $\omega(T(\Gamma(R)))\geq\max\{|\mathfrak{m}|\,:\, \mathfrak{m}\in {\rm Max}(R)\}$. Hence
$$\chi(T(\Gamma(R)))=\omega(T(\Gamma(R)))=\max\{|\mathfrak{m}|\,:\, \mathfrak{m}\in {\rm Max}(R)\}.$$
Now, assume that $\frac{R}{J(R)}\cong \mathbb{Z}_3\times \mathbb{Z}_3$. If $J(R)=\{0\}$, then using Figure $1$, $\chi(T(\Gamma(R)))=\omega(T(\Gamma(R)))=4$.
\\
\centerline{\includegraphics[scale=.35, angle=0, trim = .5cm 11cm 1cm 4.5cm, clip]{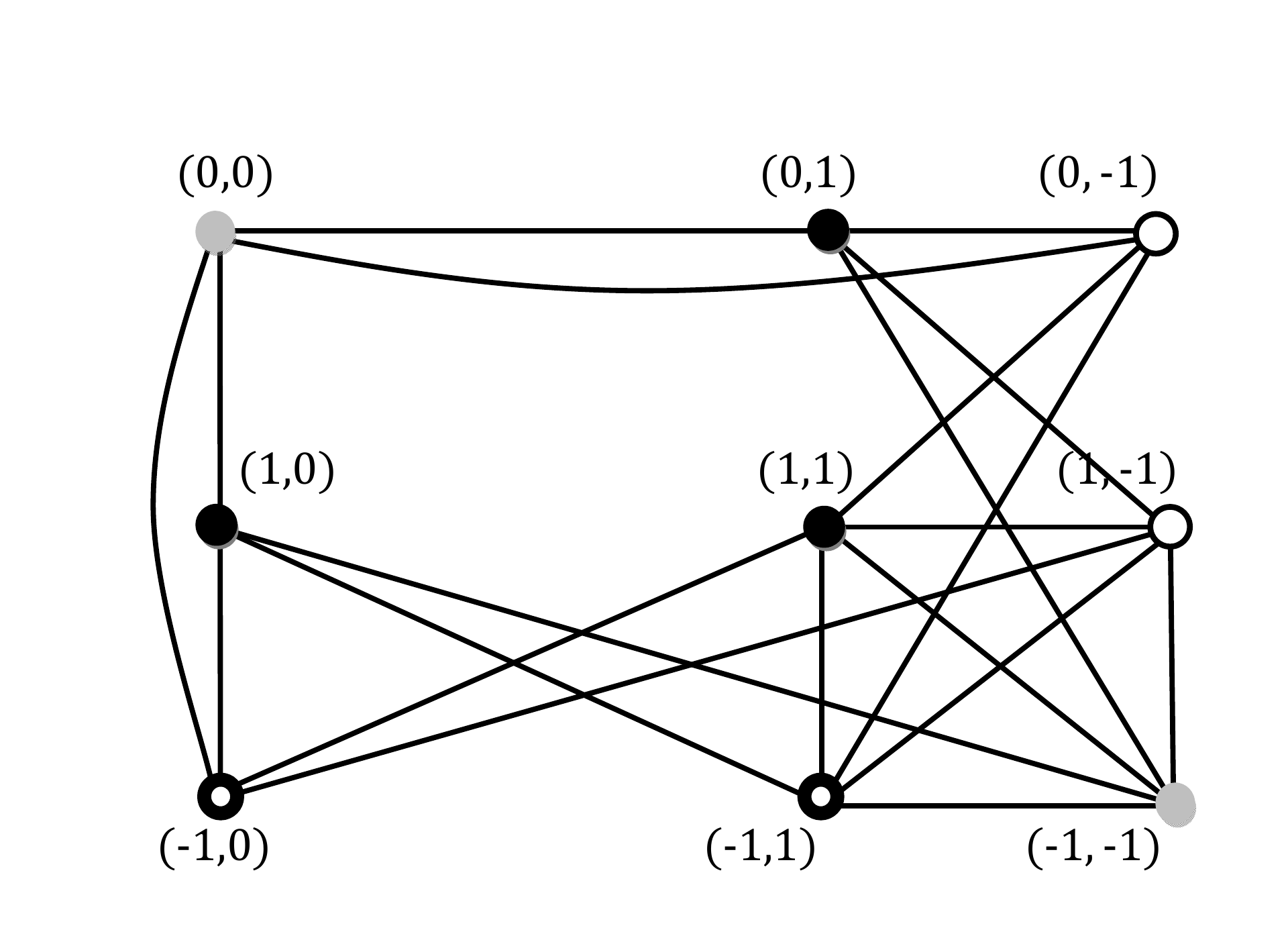}}
\centerline{$\text{\bf Figure 1. }$ A coloring of $T(\Gamma(\mathbb{Z}_3\times \mathbb{Z}_3))$ using $4$ colors.}
\\

Thus, assume that $|J(R)|\geq 2$. Since by Chinese Reminder Theorem (see \cite[Proposition 1.10]{ati}), $\frac{R}{J(R)}\cong \frac{R}{\mathfrak{m}_1}\times \frac{R}{\mathfrak{m}_2}$, where ${\rm Max(R)}=\{\mathfrak{m}_1,\mathfrak{m}_2\}$, we may assume that $R$ is partitioned into
$(0,0)+J(R),(0,1)+J(R),(0,-1)+J(R),(1,0)+J(R),(1,1)+J(R),(1,-1)+J(R),(-1,0)+J(R),(-1,1)+J(R),(-1,-1)+J(R)$. Now, we color the the complete graph induced on $(0,0)+J(R)$ by colors $a_1,\ldots, a_{|J(R)|}$, the complete graphs induced on $(0,1)+J(R)$ and $(1,0)+J(R)$ by $a'_1,\ldots, a'_{|J(R)|}$ and the complete graphs induced on $(0,-1)+J(R)$ and $(-1,0)+J(R)$ by $a''_1,\ldots, a''_{|J(R)|}$. Now, the remaining vertices form a complete $4$-partite graph with parts $(1,1)+J(R),(1,-1)+J(R),(-1,1)+J(R),(-1,-1)+J(R)$. We color the vertices in part $(1,1)+J(R)$ by $a'_1$, the vertices in part $(-1,-1)+J(R)$ by $a''_1$, the vertices in part $(1,-1)+J(R)$ by $a_1$ and the vertices in part $(-1,1)+J(R)$ by $a_2$. Using Lemma \ref{zerodivisorsofreducedsareshifted}, it can be seen that this is a proper coloring of $T(\Gamma(R))$ with $3|J(R)|=\max\{|\mathfrak{m}|\,:\, \mathfrak{m}\in {\rm Max}(R)\}$ colors. The proof is complete.}
\end{proof}

\begin{defin}
Suppose that $F_1$ and $F_2$ are two finite fields, $|F_1|\leq|F_2|$, $A\subseteq F_1$, $B\subseteq F_2$  and $|A|\leq |B|$.  Let $L$ be a table of size $|A|\times |B|$ over alphabet $C$ whose rows are labelled by the elements of $A$ and columns are labelled by the elements of $B$. The table $L$ is called a ($|A|\times |B|$) \textit{Latin-sum array} over $C$ if for every two distinct pairs  $(x_1,y_1)$ and $(x_2,y_2)$ of elements of $A\times B$ both following conditions hold:

\noindent
(i) If $x_1+x_2=0$ in $F_1$, then $L_{x_1,y_1}\neq L_{x_2,y_2}$.  \\
(ii) If $y_1+y_2=0$ in $F_2$, then $L_{x_1,y_1}\neq L_{x_2,y_2}$.
\end{defin}

\begin{remark}
If both fields have characteristic $2$, then every Latin rectangle has this property. The problem is the existence of these arrays for the other characteristics.
\end{remark}

\begin{lem}\label{Latin-sum array}
For every two finite fields $F_1$ and $F_2$ with $|F_1|\leq|F_2|$, there exists a $|F_1|\times |F_2|$ Latin-sum array over alphabet $C$, where
$$|C|=  \begin{cases}
     4\,;      & F_1=F_2=\mathbb{Z}_3,\,\, \\
    |F_2|       &{\rm otherwise}.\
\end{cases}$$
\end{lem}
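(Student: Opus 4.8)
The plan is to read a Latin-sum array for the pair $(F_1,F_2)$, with $A=F_1$ and $B=F_2$, as nothing more than a proper vertex colouring of the total graph $T(\Gamma(F_1\times F_2))$ with colour set $C$: writing $R=F_1\times F_2$ one has $Z(R)=(0\times F_2)\cup(F_1\times 0)$, so two distinct vertices $(x_1,y_1),(x_2,y_2)$ are adjacent exactly when $x_1+x_2=0$ in $F_1$ or $y_1+y_2=0$ in $F_2$, which is precisely what conditions (i)--(ii) forbid from receiving equal entries. Thus it suffices to exhibit a proper colouring using $q_2:=|F_2|$ colours in general, and $4$ colours when $F_1=F_2=\mathbb{Z}_3$. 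Two cliques are visible at once and pin down the relevant sizes: the axis line $\{0\}\times F_2$ is a clique of size $q_2$ (any two of its vertices have first coordinates summing to $0$), the axis line $F_1\times\{0\}$ is a clique of size $q_1\le q_2$, and when $F_1=F_2=\mathbb{Z}_3$ the four vertices $\{\pm1\}\times\{\pm1\}$ form a $K_4$, which is what forces the exceptional value $4$.

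First I would organise the colouring around the negation involution $x\mapsto -x$ on each coordinate, which partitions $F_i$ into the fixed element $0$ and two-element orbits $\{a,-a\}$ in odd characteristic, while in characteristic $2$ every element is fixed (so that whole coordinate direction becomes a clique, i.e.\ each line must be \emph{rainbow}). This immediately handles the base case where both fields have characteristic $2$: conditions (i) and (ii) then say exactly that the array is a Latin rectangle, and $L_{x,y}=\iota(x)+y$ for a fixed injection $\iota\colon F_1\hookrightarrow F_2$ does the job with $|C|=q_2$, as already noted in the Remark.

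The heart of the matter is the odd case, and the idea I would push is to treat the two axis lines and the \emph{bulk} $(F_1\setminus 0)\times(F_2\setminus 0)$ separately. On the bulk I would colour a cell $(x,y)$ only by the pair of signs $(\varepsilon_x,\varepsilon_y)\in\{+,-\}^2$ determined by a fixed choice of positive/negative halves of each $F_i\setminus 0$. Grouping the $\pm$-pairs of rows and of columns exhibits the bulk as a grid of $K_4$'s (the disjunction, or co-normal, product of two perfect matchings), and the sign colouring is a proper $4$-colouring of it: two bulk cells are adjacent only if they share a row-representative with opposite row-sign or a column-representative with opposite column-sign, and in either case the sign colours differ. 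This simultaneously explains the exception and the bound: when $q_1=q_2=3$ the whole bulk collapses to a single $K_4$, so it already needs $4>3$ colours (and one checks directly that $4$ colours then suffice for all of $T(\Gamma(\mathbb{Z}_3\times\mathbb{Z}_3))$), whereas if $(q_1,q_2)\neq(3,3)$ with both odd one has $q_2\ge 5$ and the four sign colours can be found among the $q_2$ available colours. The single-even (mixed) subcase is handled by the same philosophy, with the even coordinate contributing rainbow lines rather than $\pm$-pairs; there each colour class is forced to be a transversal whose columns avoid any $\pm$-pair, and one assigns to each colour a type $(s_c,t_c)$ (one column from each $\pm$-pair of the odd coordinate) and completes row by row.

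The step I expect to be the main obstacle is extending the bulk colouring across the axis cliques with no new colours, since those lines are large cliques that interact with the bulk through the negation adjacencies. The leverage is that a vertex $(0,y)$ sees only the bulk cells of column $-y$, which carry just the two sign colours of second sign $\operatorname{sgn}(-y)$, that $(x,0)$ likewise sees only the two sign colours of bulk row $-x$, and that the $q_2-4$ colours absent from the bulk are unconstrained on the axes. Colouring the axis row then becomes a systems-of-distinct-representatives problem matching $q_2$ vertices to $q_2$ colours with each vertex barred from at most two, which a short Hall check (needing only $(q_2+1)/2\ge 2$) always satisfies; the axis column is similar once the shared corner $(0,0)$ is given a free colour. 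What will require genuine care is making this Hall/SDR completion, together with the analogous type-assignment argument in the mixed case, go through \emph{uniformly} across the parities of $q_1,q_2$ and the borderline small fields, and then verifying routinely that all adjacency types---within each axis, within the bulk, and across them---are respected once the three pieces are assembled.
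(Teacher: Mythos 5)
Your reinterpretation of a Latin-sum array on $A=F_1$, $B=F_2$ as a proper vertex colouring of $T(\Gamma(F_1\times F_2))$ is exactly right, and your treatment of the two main cases matches the paper's construction in substance. For both fields of odd characteristic, the paper's Tables 1 and 2 are precisely your sign colouring in disguise: the ``bulk'' $(F_1\setminus\{0\})\times(F_2\setminus\{0\})$ receives only four symbols ($\{1,-1,2,-2\}$ or $\{1,-1,2,0\}$), with the row of $x$ versus $-x$ determining the pair $\{1,-1\}$ versus $\{2,\pm2\}$ and the column sign determining the member of the pair; the row and column of $0$ are then filled with the labels $0,x_1,-x_1,\dots$ and $0,y_1,-y_1,\dots$ themselves, which makes your Hall/SDR completion step unnecessary --- the explicit labelling already gives distinct entries on each axis and the only thing left to check is compatibility with the four bulk symbols, which the tables arrange. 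So for that case your argument is a more conceptual but equivalent packaging (your $K_4$ on $\{\pm1\}\times\{\pm1\}$ is also the correct explanation of why $\mathbb{Z}_3\times\mathbb{Z}_3$ is exceptional, and your SDR count does close correctly for all $q_2\ge 5$). The characteristic-$2$ case via $L_{x,y}=\iota(x)+y$ is the same as the paper's Latin-rectangle observation.

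The one genuine gap is the mixed case, where exactly one of the fields has characteristic $2$. There every row must be a bijection onto $C$, the column of $0$ must be rainbow, and the columns of $y$ and $-y$ must have disjoint symbol sets; the naive choice ``every row equals the identity'' fails only on the column of $0$, and the obvious repairs (e.g.\ composing each row with a transposition moving $0$) reintroduce conflicts between $\pm$-paired columns unless the displaced symbols are chosen from a sum-free-type set, which is not always large enough. Your sketch (``assign to each colour a type $(s_c,t_c)$ \dots and complete row by row'') names a plausible bookkeeping device but does not exhibit the array or verify that the row-by-row completion never gets stuck, and you acknowledge as much. The paper resolves this case by an explicit iterative construction (modify row $i-1$ in two or three positions to obtain row $i$, with a special final row), so the statement is certainly true; but as written your proposal proves only the equal-characteristic cases and leaves the mixed case as a programme rather than a proof.
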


\begin{proof}
{We consider three following cases.

{\bf Case 1.} First suppose that both $F_1$ and $F_2$ are of characteristic $2$. Thus, as we mentioned above, every Latin rectangle over $F_2$ has this property.

{\bf Case 2.} Now, suppose that both have odd characteristic. Let $n=\frac{|F_1|-1}{2}$ and $m=\frac{|F_2|-1}{2}$. Let $F_1=\{0,x_1,-x_1,\ldots,x_n,-x_n\}$ and $F_2=\{0,y_1,-y_1,\ldots,y_m,-y_m\}$. Now, we construct a $|F_1|\times |F_2|$ table over $\{0,\pm 1,\ldots,\pm m\}$, say $L$, whose rows are labelled by $0,x_1,-x_1,\ldots,x_n,-x_n$ and columns are labelled by $0,y_1,-y_1,\ldots,y_m,-y_m$, respectively. To construct $L$ as a Latin-sum array over a set $C$, we should arrange the elements of $C$ in such a way that the following conditions are satisfied:
\begin{description}
  \item[C1:] The entries of the first row are pairwise distinct,
  \item[C2:] The entries of the first column are pairwise distinct,
  \item[C3:]  The rows corresponding to $x_i$ and $-x_i$ have no common entry, for $1\leq i\leq n$,
  \item[C4:] The columns corresponding to $y_j$ and $-y_j$ have no common entry, for $1\leq j\leq m$.
\end{description}

\noindent
To satisfy these four conditions, it suffices to consider one of the following $|F_1|\times |F_2|$ arrays. Note that Table 1 involves the case $F_1=F_2=\mathbb{Z}_3$. In this case, the alphabet $C$ has $4$ elements but for $|F_2|>3$, it has exactly $|F_2|$ elements. 


\begin{center}
\begin{tabular}{|c||cc|cc|c|cc|}
\hline
   $0$&$1$&$-1$&$2$&$-2$&\ldots&$m$&$-m$ \\
   \hline\hline
   $1$&$1$&$-1$&$1$&$-1$&\ldots&$1$&$-1$ \\
   $2$ & $2$ & $0$ & $2$ & $0$  & \ldots & $2$ & $0$ \\
    \hline\hline
   $-1$ & $1$ & $-1$ & $1$ & $-1$  & \ldots & $1$ & $-1$ \\
   $-2$ & $2$ & $0$ & $2$ & $0$  & \ldots & $2$ & $0$ \\
  \hline\hline
 \vdots & \vdots &\vdots & \vdots &\vdots & $\ddots$ &\vdots &\vdots \\
  \hline\hline
  $-(n-2)$ & $1$ & $-1$ & $1$ & $-1$  & \ldots & $1$ & $-1$ \\
  $-(n-1)$ & $2$ & $0$ & $2$ & $0$  & \ldots & $2$ & $0$ \\
  \hline\hline
  $n$ & $1$ & $-1$ & $1$ & $-1$  & \ldots & $1$ & $-1$ \\
  $-n$ & $2$ & $0$ & $2$ & $0$  & \ldots & $2$ & $0$ \\
   \hline
\end{tabular}
\end{center}
\centerline{$\text{\bf Table 1. }$ A  $|F_1|\times |F_2|$ Latin-sum array in the case $n$ is odd.}

\begin{center}
\begin{tabular}{|c||c c|c c|c|c c|}
\hline
   $0$ & $1$ & $-1$ & $2$ & $-2$  & \ldots & $m$ & $-m$ \\
   \hline\hline
   $1$ & $1$ & $-1$ & $1$ & $-1$  & \ldots & $1$ & $-1$ \\
   $2$ & $2$ & $-2$ & $2$ & $-2$  & \ldots & $2$ & $-2$ \\
    \hline\hline
   $-1$ & $1$ & $-1$ & $1$ & $-1$  & \ldots & $1$ & $-1$ \\
   $-2$ & $2$ & $-2$ & $2$ & $-2$  & \ldots & $2$ & $-2$ \\
  \hline\hline
 \vdots & \vdots &\vdots & \vdots &\vdots & $\ddots$ &\vdots &\vdots \\
 \hline\hline
  $n-1$ & $1$ & $-1$ & $1$ & $-1$  & \ldots & $1$ & $-1$ \\
  $n$ & $2$ & $-2$ & $2$ & $-2$  & \ldots & $2$ & $-2$ \\
  \hline\hline
  $-(n-1)$ & $1$ & $-1$ & $1$ & $-1$  & \ldots & $1$ & $-1$ \\
  $-n$ & $2$ & $-2$ & $2$ & $-2$  & \ldots & $2$ & $-2$ \\
   \hline
\end{tabular}
\end{center}
\centerline{$\text{\bf Table 2. }$ A  $|F_1|\times |F_2|$ Latin-sum array in the case $n$ is even.}
\vspace{4mm}
{\bf Case 3.} Now, assume that exactly one $F_i$, for $i=1,2$, is of characteristic $2$. First suppose that $char(F_1)=2$ and $F_2$ is of odd characteristic. Let $|F_1|=n$, $F_1=\{x_1,x_2,\ldots,x_{n}\}$ and $F_2=\{0,y_1,-y_1,\ldots,y_m,-y_m\}$. To continue the proof, we prove the following stronger assertion: we construct a $|F_2|\times |F_2|$ table $L$ whose rows are labeled by $x_1,\ldots,x_n,\ldots,x_{2m+1}$ and columns are labeled by $0,y_1,-y_1,\ldots,y_m,-y_m$. We arrange the entries of $L$ in such a way that the following conditions are satisfied:
\begin{description}
  \item[D1:] The entries of each row are pairwise distinct,
  \item[D2:] The entries of the first column are pairwise distinct,
  \item[D3:] The columns corresponding to $y_j$ and $-y_j$ have no common entry, for $1\leq j\leq m$.
\end{description}
Regarding {\bf D1}, we define $1,2,\ldots,2m+1$ as the first row. We construct the $i$-th row from the ($i-1$)-th row by changing two symbols in the ($i-1$)-th row such that the conditions {\bf D1}, {\bf D2} and {\bf D3} are satisfied for the $j$-th row, $j=1,\ldots,i$. To construct the second row, we replace the symbol $1$ with $2$ in the first row and consider the new $1\times (2m+1)$ vector as the second row. To construct the third row, we replace the first entry of the second row by $3$ and then change two symbols $2$ and $4$ with each other (as $L_{1,2}=2$, to satisfy {\bf D3}). It can be easily checked that the entries in each row are pairwise distinct and {\bf D3} is satisfied in each step. We construct the fourth row from the third row by replacing the first entry of the third row by $4$. For the fifth row we do similarly except that we make two changes. We can continue this procedure to construct the $2m$-th row, i.e. in the Step $2i$ we make just two changes and in the Step $2i+1$ we make three changes, for $1\leq i\leq m$.  For the ($2m+1$)-th row, we first replace the first entry of the $2m$-th row with the $2m+1$. Now, $2m$ appears in the two last columns. Now, we change $2m$ and $1$. We obtain the desired Latin-sum array. In the below, we can see the resulting Latin-sum array for $|F_2|=7$.

$$\begin{tabular}{|c|| c c |c c | c c |}
   \hline
   $1$ & $2$ & $3$ & $4$ & $5$  & $6$ & $7$ \\

   $2$ & $1$ & $3$ & $4$ & $5$  & $6$ & $7$ \\
   $3$ & $1$ & $4$ & $2$ & $5$  & $6$ & $7$ \\

   $4$ & $1$ & $3$ & $2$ & $5$  & $6$ & $7$ \\
   $5$ & $1$ & $3$ & $2$ & $6$  & $4$ & $7$ \\

   $6$ & $1$ & $3$ & $2$ & $5$ & $4$ & $7$ \\
   $7$ & $6$ & $3$ & $2$ & $5$ & $4$ & $1$  \\
   \hline
\end{tabular}$$

Now, suppose that $F_1$ has odd characteristic and $F_2$ has characteristic $2$. Let $L$ be the above $|F_2|\times |F_2|$ array in which conditions {\bf D1}, {\bf D2} and {\bf D3} hold. Thus, the first $|F_1|$ rows of the transpose of $L$ is the desired Latin-sum array. The proof is complete.}
\end{proof}

Now, we determine the chromatic number of the total graph of a direct product of finitely many fields.
\begin{lem}\label{chromatic-no-of-reduced-rings}
Let $n\geq 2$ be a positive integer and $F_1,\ldots,F_n$ be finite fields such that $|F_1|\leq\cdots\leq |F_n|$. Suppose that one of the following conditions holds:

\noindent {\rm (i)} The field $F_1$ has even characteristic,\\
{\rm (ii)} Every $F_i$ has odd characteristic and $F_1\times F_2\ncong\mathbb{Z}_3\times \mathbb{Z}_3$.

\noindent Then $\chi\big(T(\Gamma(F_1\times\cdots\times F_n))\big)=|F_2|\cdots|F_n|$.
\end{lem}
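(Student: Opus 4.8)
First I would pin down $\max\{|\mathfrak m|:\mathfrak m\in{\rm Max}(R)\}$ for $R=F_1\times\cdots\times F_n$: the maximal ideals are $\mathfrak m_i=F_1\times\cdots\times\{0\}\times\cdots\times F_n$ with $|\mathfrak m_i|=|R|/|F_i|$, so the largest is obtained by deleting the smallest field, giving $|\mathfrak m_1|=|F_2|\cdots|F_n|$. Since $\mathfrak m_1$ is an ideal contained in $Z(R)$ (Lemma \ref{zerodivisorsofzero-dimensionals}), any two of its elements sum into $\mathfrak m_1\subseteq Z(R)$, so $\mathfrak m_1$ induces a clique and $\chi\big(T(\Gamma(R))\big)\ge\omega\big(T(\Gamma(R))\big)\ge|F_2|\cdots|F_n|$. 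Everything then reduces to producing a proper coloring with exactly $|F_2|\cdots|F_n|$ colors. Throughout I use that $x,y$ are adjacent iff $x_i+y_i=0$ for some coordinate $i$.

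\textbf{The even case (i).} When $char(F_1)=2$ I would give a direct product construction. For each $j\ge2$ choose, via Lemma \ref{Latin-sum array}, a $|F_1|\times|F_j|$ Latin-sum array $L^{(j)}$ over $F_j$; because $char(F_1)=2$ every row-label satisfies $x_1=-x_1$, so condition (i) forces each row of $L^{(j)}$ to be a permutation of $F_j$, while condition (ii) says columns $t,-t$ carry disjoint symbols. Color a vertex by $c(x_1,\dots,x_n)=\big(L^{(2)}_{x_1,x_2},\dots,L^{(n)}_{x_1,x_n}\big)\in F_2\times\cdots\times F_n$, which uses exactly $|F_2|\cdots|F_n|$ colors. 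If $x,y$ are adjacent through a coordinate $i\ge2$, condition (ii) of $L^{(i)}$ separates their $i$-th colors unless $(x_1,x_i)=(y_1,y_i)$; if they agree there (so in particular $x_1=y_1$), pick a coordinate $j$ with $x_j\ne y_j$, where the common row $x_1$ of $L^{(j)}$ is a permutation and hence separates them. Adjacency through coordinate $1$ forces $x_1=y_1$ and is handled identically. Moreover each color class has exactly one vertex per value of $x_1$, so this coloring is automatically equitable, which matters below.

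\textbf{The odd case (ii).} Here $char(F_1)$ is odd, the row-negation condition is incompatible with a row being a permutation, and the naive product fails on the cliques created by a vanishing coordinate. Instead I would induct on $n$, peeling off the largest field: write $R=R'\times F_n$ with $R'=F_1\times\cdots\times F_{n-1}$ (which again satisfies (ii)), take by induction a proper coloring $M$ of $T(\Gamma(R'))$ with $|F_2|\cdots|F_{n-1}|$ colors, and set $c(x',t)=\big(M(x'),f_{x'}(t)\big)$, the second coordinate ranging over an alphabet of size $|F_n|$. The key structural point is that a class $\mathcal C_\gamma=M^{-1}(\gamma)$ is an independent set of $T(\Gamma(R'))$, so any two distinct elements of $\mathcal C_\gamma$ have a unit sum and can become adjacent in $R$ only through the new coordinate. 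Hence $\{f_{x'}\}_{x'\in\mathcal C_\gamma}$ need only form an array over $F_n$ (rows $\mathcal C_\gamma$, columns $F_n$) whose columns $t,-t$ carry disjoint symbols, with the rows indexed by $x'$ having $2x'\in Z(R')$ (whose fibers are cliques) required to be permutations. Crucially \emph{no} row-negation condition appears, so one may take all rows to be permutations --- exactly the array built in Case 3 (conditions D1, D3) of the proof of Lemma \ref{Latin-sum array}, restricted to $|\mathcal C_\gamma|$ rows. A short case check on the two adjacency types shows $c$ is proper, and since every row is a permutation the classes of $c$ have size $|\mathcal C_\gamma|$, so equitability propagates.

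\textbf{Main obstacle.} The crux is the inductive step of case (ii), in two parts. First, one must genuinely have the permutation-row / disjoint-column array over $F_n$ using only $|F_n|$ symbols; this is where the hypothesis re-enters, for if $F_n$ were $\mathbb Z_3$ then, $F_n$ being the largest field, every $F_i$ would equal $\mathbb Z_3$ and $F_1\times F_2$ would be the excluded $\mathbb Z_3\times\mathbb Z_3$, so in every application $F_n\ne\mathbb Z_3$ and $|F_n|$ symbols suffice. Second, and more delicate, the gadget for $\mathcal C_\gamma$ offers only $|F_n|$ rows, so I must guarantee $|\mathcal C_\gamma|\le|F_n|$; since independent sets of $T(\Gamma(R'))$ can exceed $|F_1|$ in size, this forces me to carry an equitability invariant (all classes of size $|F_1|\le|F_n|$) through the induction, and hence to start from an equitable base-case Latin-sum array for $F_1\times F_2$. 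Proving that the Latin-sum arrays of Lemma \ref{Latin-sum array} can be chosen balanced in the odd--odd case is the step I expect to demand the most care.
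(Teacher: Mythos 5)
Your lower bound and your treatment of case (i) coincide with the paper's: the paper colors $(x_1,\ldots,x_n)$ by $\bigl(L^{F_1,F_2}_{x_1,x_2},\ldots,L^{F_1,F_n}_{x_1,x_n}\bigr)$ and uses the clique $\{0\}\times F_2\times\cdots\times F_n$. Where you diverge is case (ii): the paper applies the \emph{same} product coloring there, using the odd--odd arrays of Tables 1 and 2, and argues properness by asserting that if the adjacency coordinate $j\ge 2$ has $x_j=y_j$ then $x_1=y_1$ ``and so for every $i$, $x_i=y_i$.'' That last step needs the rows of the arrays to be injective, which Tables 1 and 2 are not (and, as you observe, cannot be: a row that is a permutation of $|F_2|$ symbols cannot be disjoint from the row labelled by its negative). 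Concretely, in $\mathbb{Z}_5^{\,3}$ with Table 2 the adjacent vertices $(x_1,0,y_1)$ and $(x_1,0,y_2)$ both receive the color $(1,1)$. So your diagnosis that the naive product coloring fails on the cliques created by a vanishing coordinate is correct, and your scepticism is aimed at a genuine defect in the paper's own argument for case (ii) when $n\ge 3$.

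However, your replacement argument is itself incomplete at exactly the point you flag. The induction peeling off $F_n$ hinges on every color class $\mathcal{C}_\gamma$ of the coloring of $F_1\times\cdots\times F_{n-1}$ having at most $|F_n|$ elements, and you reduce this to producing a \emph{balanced} Latin-sum array for the base case $F_1\times F_2$ --- but you do not produce one, and the arrays of Lemma \ref{Latin-sum array} are demonstrably unbalanced: in the $5\times 5$ instance of Table 2 the symbol $1$ occurs six times while $0$ occurs once, so one class of the base coloring of $\mathbb{Z}_5\times\mathbb{Z}_5$ already has six elements, which exceeds $|F_3|$ when $F_3=\mathbb{Z}_5$. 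Until a balanced base array (or some other bound of the form $\max_\gamma|\mathcal{C}_\gamma|\le |F_3|$) is actually constructed, your case (ii) is not proved; this is a genuine missing ingredient rather than a routine verification, and it is not supplied anywhere in the paper either.
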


\begin{proof}
{For $i=2,\ldots,n$, let $L^{F_1,F_i}$ be the Latin-sum array deduced from Lemma \ref{Latin-sum array}. We define the following map $f$ on $F_1\times\cdots\times F_n$ as follows: $$f\big((x_1,\ldots,x_n)\big)=\big(L^{F_1,F_2}_{x_1,x_2},\ldots,L^{F_1,F_{n}}_{x_1,x_{n}}\big),$$
where $L^{F_1,F_k}_{x_i,x_j}$ is the  $(x_i,x_j)$-entry of $L^{F_1,F_k}$. Now, we prove that $f$ is a proper vertex coloring for $T(\Gamma(F_1\times\cdots\times F_n))$ in both Cases (i) and (ii). First, suppose that Case (i) holds. Let $(x_1,\ldots,x_n)$ and $(y_1,\ldots,y_n)$ be two adjacent vertices of $T(\Gamma(F_1\times\cdots\times F_n))$ with the same color. Hence, there exists $1\leq j\leq n$, such that $x_j+y_j=0$. First assume that $j=1$. Since $L^{F_1,F_i}$ is a Latin-sum array and $f\big((x_1,\ldots,x_n)\big)=f\big((y_1,\ldots,y_n)\big)$, we conclude that $x_i=y_i$, for $i=2,\dots,n$. Thus, $(x_1,\ldots,x_n)=(y_1,\ldots,y_n)$. Now, assume that $j\geq 2$. If $x_j=y_j$, then by $L^{F_1,F_j}_{x_1,x_j}=L^{F_1,F_j}_{y_1,y_j}$, we deduce that $x_1=y_1$ and so for every $i$, $x_i=y_i$. Hence suppose that $x_j=-y_j\in F_j\setminus\{0\}$. Since $L^{F_1,F_j}$ is a Latin-sum array and $L^{F_1,F_j}_{x_1,x_j}=L^{F_1,F_j}_{y_1,y_j}$, we get a contradiction. Thus, in Case (i), $f$ is a proper vertex coloring with $|F_2|\times\cdots\times |F_n|$ colors. On the other hand, $\{0\}\times F_2\times\cdots\times F_n$ is a clique for $T(\Gamma(F_1\times\cdots\times F_n))$. Thus, $\chi\big(T(\Gamma(F_1\times\cdots\times F_n))\big)=|F_2|\cdots|F_n|$. Case (ii) can be similarly proved. The proof is complete.}
\end{proof}
Now, we have the following theorem.
\begin{thm}\label{chromatic-no}
Let $R$ be a finite ring such that one of the following conditions holds:

\noindent {\rm (i)} The residue field of $R$ of minimum size has even characteristic,\\
{\rm (ii)} Every residue field of $R$ has odd characteristic and $\frac{R}{J(R)}$ has no summand isomorphic to $\mathbb{Z}_3\times \mathbb{Z}_3$.

\noindent Then the following equalities hold:
$$\chi(T(\Gamma(R)))=\omega(T(\Gamma(R)))=\chi(Z(\Gamma(R)))=\omega(Z(\Gamma(R)))=\max\{|\mathfrak{m}|\,:\, \mathfrak{m}\in {\rm Max}(R)\}.$$
\end{thm}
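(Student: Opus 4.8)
The plan is to reduce the statement to the reduced case handled by Lemma \ref{chromatic-no-of-reduced-rings} and then sandwich the chromatic number between a matching clique lower bound and a blow-up upper bound. Since $R$ is finite it is Artinian, hence zero-dimensional, so by Lemma \ref{zerodivisorsofzero-dimensionals} we have $Z(R)=\bigcup_{\mathfrak{m}\in {\rm Max}(R)}\mathfrak{m}$, and $\frac{R}{J(R)}$ is a finite direct product $F_1\times\cdots\times F_n$ of its residue fields, which I would order so that $|F_1|\le\cdots\le|F_n|$. I would first dispose of the local case $n=1$: there $Z(R)=\mathfrak{m}$ is an ideal, so Lemma \ref{zerodivisorsareideal} exhibits $T(\Gamma(R))$ as a disjoint union of copies of $K_{|\mathfrak{m}|}$ (together with some $K_{|\mathfrak{m}|,|\mathfrak{m}|}$ when $2\notin Z(R)$), from which $\chi=\omega=|\mathfrak{m}|=\max\{|\mathfrak{m}|\}$ is immediate, and the induced subgraph on $Z(R)=\mathfrak{m}$ is exactly $K_{|\mathfrak{m}|}$, giving the same equalities for $Z(\Gamma(R))$. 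So I assume $n\ge 2$ henceforth.

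For the upper bound I would invoke the blow-up structure established in the proof of Lemma \ref{coloring-reduction-to-reduced-rings}, namely $T(\Gamma(R))\cong T(\Gamma(\frac{R}{J(R)}))(H_1,\ldots,H_k)$ with each $H_i$ of order $|J(R)|$, so that Lemma \ref{like-blow-up} yields
$$\chi(T(\Gamma(R)))\le |J(R)|\,\chi\Big(T\big(\Gamma(\tfrac{R}{J(R)})\big)\Big).$$
The crux is that hypotheses (i)/(ii) on $R$ translate precisely into the hypotheses of Lemma \ref{chromatic-no-of-reduced-rings} for $F_1\times\cdots\times F_n$: in case (i) the minimal residue field $F_1$ has even characteristic, and in case (ii) every $F_i$ has odd characteristic while the absence of a $\mathbb{Z}_3\times\mathbb{Z}_3$ summand forces $F_1\times F_2\ncong\mathbb{Z}_3\times\mathbb{Z}_3$. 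Hence Lemma \ref{chromatic-no-of-reduced-rings} gives $\chi(T(\Gamma(\frac{R}{J(R)})))=|F_2|\cdots|F_n|=\max\{|\overline{\mathfrak{m}}|:\overline{\mathfrak{m}}\in {\rm Max}(\frac{R}{J(R)})\}$. Since the maximal ideals of $R$ correspond bijectively to those of $\frac{R}{J(R)}$ and $|\mathfrak{m}|=|J(R)|\,|\mathfrak{m}/J(R)|$, multiplying the displayed inequality by $|J(R)|$ gives $\chi(T(\Gamma(R)))\le\max\{|\mathfrak{m}|:\mathfrak{m}\in {\rm Max}(R)\}$.

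For the lower bound I would observe that for any $\mathfrak{m}\in {\rm Max}(R)$ and distinct $x,y\in\mathfrak{m}$ we have $x+y\in\mathfrak{m}\subseteq Z(R)$ by Lemma \ref{zerodivisorsofzero-dimensionals}, so $\mathfrak{m}$ induces a clique of size $|\mathfrak{m}|$ and therefore $\omega(T(\Gamma(R)))\ge\max\{|\mathfrak{m}|\}$. Combining this with the trivial inequality $\omega\le\chi$ and the upper bound collapses the chain into equalities, proving $\chi(T(\Gamma(R)))=\omega(T(\Gamma(R)))=\max\{|\mathfrak{m}|\}$. The statement for $Z(\Gamma(R))$ then follows at no extra cost: the clique just exhibited lies entirely inside $Z(R)$, so $\omega(Z(\Gamma(R)))\ge\max\{|\mathfrak{m}|\}$, while $Z(\Gamma(R))$ is an induced subgraph of $T(\Gamma(R))$, whence $\chi(Z(\Gamma(R)))\le\chi(T(\Gamma(R)))=\max\{|\mathfrak{m}|\}$, and the two bounds meet.

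I expect the genuine content to lie not in this assembly but in the two inputs already proved, namely the existence of the Latin-sum arrays (Lemma \ref{Latin-sum array}) and their use to color the reduced ring (Lemma \ref{chromatic-no-of-reduced-rings}); here the only real care needed is the translation of the characteristic hypotheses and, in case (ii), checking that forbidding a $\mathbb{Z}_3\times\mathbb{Z}_3$ summand is exactly what excludes the exceptional $3\times 3$ array. The other points demanding attention are the counting identity $|J(R)|\max|\overline{\mathfrak{m}}|=\max|\mathfrak{m}|$ and the degenerate boundary where $R$ is a field (so that $Z(R)=\{0\}$ and $\max|\mathfrak{m}|=1$), which sits outside the scope of the product argument and must be set aside or treated separately; both are routine once the product decomposition is in hand.
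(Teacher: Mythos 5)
Your proposal is correct and follows essentially the same route as the paper: reduce to $\frac{R}{J(R)}$ via the blow-up inequality from Lemma \ref{coloring-reduction-to-reduced-rings}, apply the Chinese Remainder Theorem and Lemma \ref{chromatic-no-of-reduced-rings} for the upper bound, and use the clique on a largest maximal ideal (Lemma \ref{zerodivisorsofzero-dimensionals}) for the lower bound, with $Z(\Gamma(R))$ handled as an induced subgraph containing that clique. Your explicit treatment of the local/field boundary case (which Lemma \ref{chromatic-no-of-reduced-rings} excludes by requiring $n\geq 2$, and where a field of odd characteristic actually needs separate attention) is a point of care the paper's terse proof glosses over, but the substance of the argument is identical.
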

\begin{proof}
{By Lemma \ref{coloring-reduction-to-reduced-rings}, Chinese Reminder Theorem (see \cite[Proposition 1.10]{ati}) and Lemma \ref{chromatic-no-of-reduced-rings}, we find that  $\chi(T(\Gamma(R)))=\omega(T(\Gamma(R)))=\max\{|\mathfrak{m}|\,:\, \mathfrak{m}\in {\rm Max}(R)\}$. On the other hand, by Lemma \ref{zerodivisorsofzero-dimensionals}, $\omega(Z(\Gamma(R)))\geq\max\{|\mathfrak{m}|\,:\, \mathfrak{m}\in {\rm Max}(R)\}$. The proof is complete.}
\end{proof}

\begin{remark}
Here, as an example of the correctness of (1), we will provide a proper vertex coloring of $T(\Gamma(\mathbb{Z}_3\times \mathbb{Z}_3\times \mathbb{Z}_3))$ using $9$ colors which is not deduced from the previous theorems. The coloring classes are as follow.
$$\begin{tabular}{l l}

\{$(1,0,1), (1,1,0), (0,1,1), (1,1,1)$\} & \{$(-1,0,1), (-1,1,1), (0,1,0)$\} \\
\{$(-1,0,-1), (0,-1,0), (-1,-1,-1)$\} &  \{$(-1,0,0), (0,-1,1)$\} \\
\{$(1,0,0), (0,1,-1), (1,1,-1)$\} & \{$(-1,1,0), (0,0,-1), (-1,1,-1)$\} \\
\{$(-1,-1,0), (-1,-1,1), (0,0,1)$\} & \{$(0,0,0), (1,-1,1)$\}\\
\{$(1,-1,0), (0,-1,-1), (1,-1,-1), (1,0,-1)$\} &
\end{tabular}$$

\end{remark}
\vspace{4mm} \noindent{\bf\large 4. The induced subgraph on regular elements}\vspace{4mm}\\
In \cite{akbari-heydari}, for a Noetherian ring $R$ with $2\notin Z(R)$, the clique number and the chromatic number of $Reg(\Gamma(R))$ are studied. Indeed, the following result was proved.
\begin{thm}\label{thm-akbari-heydari}{\rm (\cite[Theorem 6]{akbari-heydari})}
Let $R$ be a ring and $2\notin Z(R)$. If $Z(R)=\cup_{i=1}^nP_i$, where $P_1,\ldots,P_n$ are prime ideals of $R$ and $Z(R)\neq\cup_{i\neq j}P_i$, for $j=1,\ldots,n$, then $\chi\Big(Reg\big(\Gamma(R)\big)\Big)=\omega\Big(Reg\big(\Gamma(R)\big)\Big)=2^n$.
\end{thm}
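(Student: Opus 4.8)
The plan is to prove the two inequalities $\omega\big(Reg(\Gamma(R))\big)\geq 2^n$ and $\chi\big(Reg(\Gamma(R))\big)\leq 2^n$ separately; since $\omega\leq\chi$ always holds, together they force $\omega=\chi=2^n$. Everything rests on one reformulation of adjacency. If $x\in Reg(R)$ then $x\notin P_i$ for every $i$, so the image of $x$ in the domain $R/P_i$ is nonzero. Since $2\notin Z(R)$ we have $2\notin P_i$, hence $R/P_i$ has characteristic $\neq 2$ and the only solution of $a=-a$ in $R/P_i$ is $a=0$. Therefore, for regular $x,y$, the condition $x+y\in Z(R)$ is equivalent to $x+y\in P_i$ for some $i$, that is, to $x+P_i=-(y+P_i)$ in $R/P_i$ for some $i$; and when this happens the two images are automatically distinct.

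For the clique bound I would first draw out the combinatorial content of irredundancy. The hypothesis $Z(R)\neq\bigcup_{i\neq j}P_i$ for each $j$ forces the $P_i$ to be pairwise incomparable, since $P_a\subseteq P_b$ would let one delete $P_a$ from the union. From incomparability I get $\bigcap_{i\neq k}P_i\not\subseteq P_k$ for each $k$: otherwise $\prod_{i\neq k}P_i\subseteq\bigcap_{i\neq k}P_i\subseteq P_k$, and primeness of $P_k$ would give $P_i\subseteq P_k$ for some $i\neq k$. Hence I may pick $s_k\in\big(\bigcap_{i\neq k}P_i\big)\setminus P_k$, so that $s_k\in P_i$ for $i\neq k$ while $\beta_k:=s_k+P_k\neq 0$ in $R/P_k$. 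For each sign vector $\epsilon\in\{\pm 1\}^n$ set $w_\epsilon=\sum_{k=1}^n\epsilon_k s_k$. Reduction modulo $P_i$ kills every term except $k=i$, giving $w_\epsilon+P_i=\epsilon_i\beta_i\neq 0$, so each $w_\epsilon$ is regular. If $\epsilon,\epsilon'$ differ in coordinate $k$, then $w_\epsilon+P_k=\epsilon_k\beta_k=-(w_{\epsilon'}+P_k)$, so $w_\epsilon+w_{\epsilon'}\in P_k\subseteq Z(R)$ and the two (distinct) vertices are adjacent. Thus $\{w_\epsilon\}_\epsilon$ is a clique of size $2^n$. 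This existence of the separating elements $s_k$ is the main obstacle: it is the only point where the irredundancy hypothesis is genuinely used, routed through incomparability of the $P_i$ and the fact that a prime containing a product of ideals contains one of them.

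For the chromatic bound I would exhibit an explicit $2^n$-coloring. In each $R/P_i$ the nonzero elements split under negation into two-element orbits $\{a,-a\}$ (no fixed points, again because the characteristic is not $2$); choose a transversal $S_i$, so $(R/P_i)\setminus\{0\}=S_i\sqcup(-S_i)$. For $x\in Reg(R)$ define $\sigma_i(x)\in\{0,1\}$ according as $x+P_i$ lies in $S_i$ or in $-S_i$, and color $x$ by the tuple $\big(\sigma_1(x),\ldots,\sigma_n(x)\big)$, which uses $2^n$ colors. If two regular elements share a color, then for every $i$ their images lie on the same side of the split, so $x+P_i\neq-(y+P_i)$ for all $i$; by the reformulation of adjacency they are nonadjacent, and the coloring is proper. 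Hence $\chi\leq 2^n$, and combined with the clique above we obtain $2^n\leq\omega\leq\chi\leq 2^n$, completing the proof.
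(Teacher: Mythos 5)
The paper does not prove this statement at all: it is quoted verbatim from reference \cite{akbari-heydari} (Theorem 6 there) and used as a black box, so there is no internal proof to compare yours against. Your argument is correct and complete on its own terms: the reduction of adjacency to ``$x+P_i=-(y+P_i)$ for some $i$'', the use of irredundancy to get pairwise incomparability and hence elements $s_k\in\big(\bigcap_{i\neq k}P_i\big)\setminus P_k$ (via a prime containing a product of ideals containing a factor), the sign-vector clique $\{w_\epsilon\}$ of size $2^n$, and the $2^n$-coloring by negation-transversals in each $R/P_i$ all check out, with the hypothesis $2\notin Z(R)$ correctly invoked exactly where negation must be fixed-point-free on the nonzero elements of each $R/P_i$. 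The only cosmetic remarks: the choice of transversals $S_i$ uses the axiom of choice when the $R/P_i$ are infinite, and the case $n=1$ rests on the convention that an empty intersection of ideals is $R$; neither affects correctness.
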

\begin{cor}
Let $R$ be a finite ring. If every residue field of $R$ has odd characteristic, then
$$\chi\Big(Reg\big(\Gamma(R)\big)\Big)=\omega\Big(Reg\big(\Gamma(R)\big)\Big)=2^{|{\rm Max}(R)|}.$$
\end{cor}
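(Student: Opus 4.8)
The plan is to deduce this corollary directly from Theorem~\ref{thm-akbari-heydari}, by verifying its two hypotheses for a finite ring $R$ all of whose residue fields have odd characteristic, and then by identifying the integer $n$ appearing in that theorem with $|{\rm Max}(R)|$.

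First I would check the standing hypothesis $2\notin Z(R)$. Since $R$ is finite it is Artinian, hence zero-dimensional, so every prime ideal is maximal and Lemma~\ref{zerodivisorsofzero-dimensionals} applies. For each maximal ideal $\mathfrak{m}$ the residue field $R/\mathfrak{m}$ has odd characteristic by hypothesis, so the image of $2$ in $R/\mathfrak{m}$ is nonzero, i.e. $2\notin\mathfrak{m}$. Because $Z(R)$ is the union of all maximal ideals by Lemma~\ref{zerodivisorsofzero-dimensionals}, it follows that $2\notin Z(R)$, as required by Theorem~\ref{thm-akbari-heydari}.

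Next I would produce the prime decomposition of $Z(R)$ with the correct count. Writing ${\rm Max}(R)=\{\mathfrak{m}_1,\ldots,\mathfrak{m}_n\}$ with $n=|{\rm Max}(R)|$, Lemma~\ref{zerodivisorsofzero-dimensionals} gives $Z(R)=\bigcup_{i=1}^n\mathfrak{m}_i$, and each $\mathfrak{m}_i$ is in particular prime. The one point requiring a short argument—and the step I expect to carry the only real (though entirely standard) content—is the irredundancy condition $Z(R)\neq\bigcup_{i\neq j}\mathfrak{m}_i$ for every $j$. This I would obtain from the prime avoidance lemma: if $\mathfrak{m}_j\subseteq\bigcup_{i\neq j}\mathfrak{m}_i$, then $\mathfrak{m}_j\subseteq\mathfrak{m}_i$ for some $i\neq j$, forcing $\mathfrak{m}_j=\mathfrak{m}_i$ by maximality, a contradiction. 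Hence each $\mathfrak{m}_j$ contains an element outside $\bigcup_{i\neq j}\mathfrak{m}_i$, so the union is genuinely irredundant with exactly $n$ terms.

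Having verified both hypotheses, I would apply Theorem~\ref{thm-akbari-heydari} to this decomposition to conclude
$$\chi\Big(Reg\big(\Gamma(R)\big)\Big)=\omega\Big(Reg\big(\Gamma(R)\big)\Big)=2^n=2^{|{\rm Max}(R)|},$$
which is precisely the asserted equality. The subtlety worth flagging is that the exponent must be the number of primes in an \emph{irredundant} decomposition of $Z(R)$; the prime-avoidance argument above is exactly what guarantees this number equals $|{\rm Max}(R)|$ and is not accidentally smaller, so no maximal ideal can be absorbed into the union of the others.
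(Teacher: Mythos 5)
Your proposal is correct and follows the paper's own argument exactly: Lemma \ref{zerodivisorsofzero-dimensionals} to write $Z(R)$ as the union of the maximal ideals, the odd-characteristic hypothesis to get $2\notin Z(R)$, prime avoidance to certify irredundancy of the union, and then Theorem \ref{thm-akbari-heydari}. You have merely spelled out the steps the paper compresses into three sentences; no differences of substance.
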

\begin{proof}
{Since $R$ is finite, by Lemma \ref{zerodivisorsofzero-dimensionals}, $Z(R)=\cup _{i=1}^n\mathfrak{m}_i$, where ${\rm Max}(R)=\{\mathfrak{m}_1,\ldots,\mathfrak{m}_n\}$. Since every residue field of $R$ has odd characteristic, we deduce that $2\notin \cup _{i=1}^n\mathfrak{m}_i$.  Now using the Prime Avoidance Theorem (\cite[Theorem 3.61]{sharp}), the assertion immediately follows from Theorem \ref{thm-akbari-heydari}.}
\end{proof}
Similar to the previous results, we are going to determine the clique number and the chromatic number of $Reg(\Gamma(R))$, for a finite ring $R$ with $2\in Z(R)$. We start with the following lemma.
\begin{lem}\label{coloring-reduction-to-reduced-rings for regs}
Let $R$ be a finite ring. Then the following holds.
$$\chi\Big(Reg\big(\Gamma(R)\big)\Big)\leq |J(R)|\chi\Big(Reg\big(\Gamma(\frac{R}{J(R)})\big)\Big).$$
Moreover, if $2\notin Z(R)$, then $\chi\Big(Reg\big(\Gamma(R)\big)\Big)=\chi\Big(Reg\big(\Gamma(\frac{R}{J(R)})\big)\Big)$.
\end{lem}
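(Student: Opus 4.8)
The plan is to mirror the reduction carried out in Lemma~\ref{coloring-reduction-to-reduced-rings} for the total graph, now restricted to the regular elements. Write $\bar{R}=\frac{R}{J(R)}$. Since $R$ is finite it is zero-dimensional, so by Lemma~\ref{zerodivisorsofzero-dimensionals} we have $Reg(R)=U(R)$, and an element $x\in R$ is a unit if and only if its image $x+J(R)$ is a unit of $\bar{R}$. Consequently $Reg(R)$ is precisely the union of those cosets $u_i+J(R)$ for which $u_i+J(R)\in Reg(\bar{R})$: if $\bar{u}_1,\dots,\bar{u}_t$ are the regular elements of $\bar{R}$ and $u_1,\dots,u_t$ are chosen representatives, then $Reg(R)=\bigcup_{i=1}^t (u_i+J(R))$ is a disjoint union of $t=|Reg(\bar{R})|$ cosets, each of size $|J(R)|$. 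I would then show that $Reg(\Gamma(R))\cong Reg(\Gamma(\bar{R}))(H_1,\dots,H_t)$, where each $H_i$ equals $K_{|J(R)|}$ if $2\in Z(R)$ and $\overline{K_{|J(R)|}}$ otherwise.

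To verify this isomorphism, take two distinct regular elements $x=u_i+a$ and $y=u_j+b$ with $a,b\in J(R)=Nil(R)$. Their sum is $x+y=(u_i+u_j)+(a+b)$ with $a+b\in Nil(R)$, so by Lemma~\ref{shiftingbynilpotents} we have $x+y\in Z(R)$ if and only if $u_i+u_j\in Z(R)$, and by Lemma~\ref{zerodivisorsofreducedsareshifted} this holds if and only if $\bar{u}_i+\bar{u}_j\in Z(\bar{R})$. When $i\ne j$ this says that $x$ and $y$ are adjacent in $Reg(\Gamma(R))$ exactly when $\bar{u}_i$ and $\bar{u}_j$ are adjacent in $Reg(\Gamma(\bar{R}))$, which is condition (i) of the blow-up construction. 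When $i=j$ the criterion becomes $2\bar{u}_i\in Z(\bar{R})$; since $\bar{u}_i$ is a unit of $\bar{R}$, this is equivalent to $2\in Z(\bar{R})$, and applying Lemma~\ref{zerodivisorsofreducedsareshifted} to the element $2$ shows $2\in Z(\bar{R})$ if and only if $2\in Z(R)$. Hence within each coset all $\binom{|J(R)|}{2}$ pairs are adjacent when $2\in Z(R)$ and none are adjacent otherwise, which is exactly condition (ii) with the stated $H_i$. This within-coset analysis --- in particular the reduction of $2\bar{u}_i\in Z(\bar{R})$ to the single condition $2\in Z(R)$ using that $\bar{u}_i$ is a unit --- is the one step that needs genuine care; the between-coset part is immediate from the two shifting lemmas, and so this is where I expect the main (if modest) obstacle to lie.

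Once the isomorphism is in hand, the first inequality follows directly from Lemma~\ref{like-blow-up} with $m=|J(R)|$ and $G=Reg(\Gamma(\bar{R}))$, giving $\chi(Reg(\Gamma(R)))\le |J(R)|\,\chi(Reg(\Gamma(\bar{R})))$. For the ``moreover'' statement, the hypothesis $2\notin Z(R)$ forces every $H_i=\overline{K_{|J(R)|}}$, so $Reg(\Gamma(R))$ is a balanced blow-up of $Reg(\Gamma(\bar{R}))$. Since Lemma~\ref{like-blow-up} yields only the weak bound here, I would argue directly that blowing each vertex up into an independent set of equal size leaves the chromatic number unchanged: any proper coloring of $Reg(\Gamma(\bar{R}))$ lifts by assigning to every vertex of the coset $u_i+J(R)$ the color of $\bar{u}_i$, which is proper because each coset is now an independent set while inter-coset adjacencies copy those of $Reg(\Gamma(\bar{R}))$; this gives the inequality $\le$. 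Conversely, choosing one representative from each coset exhibits $Reg(\Gamma(\bar{R}))$ as an induced subgraph, giving $\ge$. Therefore $\chi(Reg(\Gamma(R)))=\chi(Reg(\Gamma(\bar{R})))$, completing the proof.
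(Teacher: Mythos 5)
Your proposal is correct and follows essentially the same route as the paper, which simply notes that the argument of Lemma \ref{coloring-reduction-to-reduced-rings} carries over and that for $2\notin Z(R)$ the graph $Reg(\Gamma(R))$ is a balanced blow-up of $Reg(\Gamma(\frac{R}{J(R)}))$; you have filled in the details (including the correct observation that $2\bar{u}_i\in Z(\bar{R})$ reduces to $2\in Z(R)$ because $\bar{u}_i$ is a unit, so all the $H_i$ coincide) accurately.
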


\begin{proof}
{The proof of the first statement is like to the proof of Lemma \ref{coloring-reduction-to-reduced-rings}. To prove the second statement, note that if $2\notin Z(R)$, then $Reg\big(\Gamma(R)\big)$ is a balanced blow-up of $Reg\big(\Gamma(\frac{R}{J(R)})\big)$. Hence, $\chi\Big(Reg\big(\Gamma(R)\big)\Big)=\chi\Big(Reg\big(\Gamma(\frac{R}{J(R)})\big)\Big)$.}
\end{proof}


\begin{lem}\label{latin-sum array for regs}
Let $F_1$ and $F_2$ be two finite fields with $|F_1|\leq |F_2|$. If $char(F_1)=2$, then there exists a $(|F_1|-1)\times(|F_2|-1)$ Latin-sum array over $F_2\setminus \{0\}$.
\end{lem}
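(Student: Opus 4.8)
The plan is to unwind the definition of a Latin-sum array in the special situation $char(F_1)=2$ and to observe that one of the two defining conditions collapses to the familiar ``distinct entries in each row'' requirement, after which the construction is almost immediate. Write $A=F_1\setminus\{0\}$ and $B=F_2\setminus\{0\}$, so that $|A|=|F_1|-1$ and $|B|=|F_2|-1$, and take the alphabet to be $C=F_2\setminus\{0\}$, which has exactly $|F_2|-1$ symbols. Since $char(F_1)=2$, in $F_1$ we have $x_1+x_2=0$ if and only if $x_1=x_2$; hence, among distinct pairs, condition (i) of the definition is triggered precisely when the two pairs lie in the same row, where it reads $L_{x,y_1}\neq L_{x,y_2}$ for $y_1\neq y_2$. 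Thus condition (i) is equivalent to requiring that the entries of every row be pairwise distinct.

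First I would dispose of the case $char(F_2)=2$. Here the same argument applied to $F_2$ shows $y_1+y_2=0$ if and only if $y_1=y_2$, so condition (ii) is equivalent to requiring distinct entries in every column. A Latin-sum array is therefore nothing but an $(|F_1|-1)\times(|F_2|-1)$ Latin rectangle on the $|F_2|-1$ symbols of $F_2\setminus\{0\}$. Since $|F_1|-1\leq|F_2|-1$, such a rectangle exists: identifying both the columns and the alphabet with $\mathbb{Z}_{|F_2|-1}$ and the rows with $\{0,1,\ldots,|F_1|-2\}$, the cyclic prescription $L_{i,j}=i+j\pmod{|F_2|-1}$ does the job.

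The remaining case is $char(F_2)$ odd, and this is where the asymmetry between the two fields becomes useful. Now $y_1+y_2=0$ with $y_1,y_2\neq0$ forces $y_2=-y_1\neq y_1$, so condition (ii) never constrains two entries of the same column; instead it asks that, for each nonzero $y$, the column labelled $y$ and the column labelled $-y$ share no common entry. I claim the trivial array $L_{x,y}=y$ already works. Indeed every row then lists the distinct elements of $F_2\setminus\{0\}$, so condition (i) holds; and the columns labelled $y$ and $-y$ carry the constant values $y$ and $-y$, which differ because $y\neq-y$ for $y\neq0$ in odd characteristic, so condition (ii) holds as well.

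The only genuine external input is the classical existence of Latin rectangles, used in the even case; the odd case is immediate once the definition has been simplified. The step most likely to mislead is the bookkeeping around the definition: it is tempting to import the elaborate row-by-row construction of Lemma~\ref{Latin-sum array} (Case 3), but that construction is forced by the presence of a $0$-row and $0$-column, which impose the extra ``first column distinct'' requirement, whereas here $0$ is excluded from both index sets, so no column-distinctness survives in the odd case and the one-line choice $L_{x,y}=y$ suffices. I would therefore emphasize checking that no hidden same-column condition arises from (ii) when $char(F_2)$ is odd, since this is exactly why the construction is so short.
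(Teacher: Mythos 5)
Your proposal is correct and follows essentially the same route as the paper: a Latin rectangle when both fields have characteristic $2$, and the constant-columns array $L_{x,y}=y$ (the paper's ``every row is $a_1,\ldots,a_{|F_2|-1}$'') when $char(F_2)$ is odd. Your explicit unwinding of conditions (i) and (ii) just makes the paper's two-line verification fully explicit.
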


\begin{proof}
{If both $F_1$ and $F_2$ have characteristic $2$, then it suffices to consider a $(|F_1|-1)\times(|F_2|-1)$ Latin rectangle. Otherwise, construct a $(|F_1|-1)\times(|F_2|-1)$ table whose every row is $a_1,\ldots,a_{|F_2|-1}$, where $F_2=\{0,a_1,\ldots,a_{|F_2|-1}\}$. In this table, the rows are labelled by the elements of $F_1\setminus\{0\}$ and the columns are labelled by the elements of $F_2\setminus \{0\}$.}
\end{proof}

The proof of following result is similar to the proof of Lemma \ref{chromatic-no-of-reduced-rings}.
\begin{lem}\label{chromatic-no-of-reduced-rings for regs}
Let $n\geq 2$ be a positive integer and $F_1,\ldots,F_n$ be finite fields with $|F_1|\leq\cdots\leq |F_n|$. If $char(F_1)=2$, then  $$\chi\big(Reg(\Gamma(F_1\times\cdots\times F_n))\big)=\omega\big(Reg(\Gamma(F_1\times\cdots\times F_n))\big)=(|F_2|-1)\cdots(|F_n|-1).$$
\end{lem}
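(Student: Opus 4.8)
The plan is to mirror the coloring of Lemma~\ref{chromatic-no-of-reduced-rings}, now restricted to the units. Setting $R=F_1\times\cdots\times F_n$, which is zero-dimensional, Lemma~\ref{zerodivisorsofzero-dimensionals} identifies $Reg(R)=U(R)=(F_1\setminus\{0\})\times\cdots\times(F_n\setminus\{0\})$ and tells us $Z(R)$ is the union of the maximal ideals; consequently two distinct units $x=(x_1,\ldots,x_n)$ and $y=(y_1,\ldots,y_n)$ are adjacent in $Reg(\Gamma(R))$ exactly when $x_j+y_j=0$ for some coordinate $j$. Since $(|F_2|-1)\cdots(|F_n|-1)=|U(R)|/(|F_1|-1)$, this number is the natural candidate for both $\omega$ and $\chi$, and I would prove each bound separately.

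First I would establish the lower bound by exhibiting an explicit clique. Fixing any $c\in F_1\setminus\{0\}$ and using $char(F_1)=2$, so that $c+c=0$, the set $S=\{c\}\times(F_2\setminus\{0\})\times\cdots\times(F_n\setminus\{0\})$ consists of units whose pairwise sums have first coordinate $0$ and therefore lie in $Z(R)$; thus $S$ is a clique and $\omega(Reg(\Gamma(R)))\ge(|F_2|-1)\cdots(|F_n|-1)$.

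Next I would color. For each $i=2,\ldots,n$ I would take the $(|F_1|-1)\times(|F_i|-1)$ Latin-sum array $L^{F_1,F_i}$ over $F_i\setminus\{0\}$ furnished by Lemma~\ref{latin-sum array for regs} (applicable since $char(F_1)=2$) and define
$$f\big((x_1,\ldots,x_n)\big)=\big(L^{F_1,F_2}_{x_1,x_2},\ldots,L^{F_1,F_n}_{x_1,x_n}\big),$$
which uses $(|F_2|-1)\cdots(|F_n|-1)$ colors. To verify properness, assume distinct adjacent units $x,y$ satisfy $f(x)=f(y)$ and choose $j$ with $x_j+y_j=0$. If $j=1$, then $char(F_1)=2$ forces $x_1=y_1$, and equality of the color in each coordinate $i\ge2$ together with the within-row distinctness guaranteed by condition (i) yields $x_i=y_i$ for all $i$, contradicting $x\neq y$. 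If $j\ge2$ I would split on $char(F_j)$: for odd characteristic $x_j=-y_j\neq y_j$ and condition (ii) contradicts the equality of the $j$-th color; for characteristic $2$ the relation collapses to $x_j=y_j$, and the column-distinctness of the relevant Latin rectangle forces $x_1=y_1$, reducing to the previous case.

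The hard part will be the coordinate $j\ge2$ analysis, since that is where the two constructions packaged inside Lemma~\ref{latin-sum array for regs}---a genuine Latin rectangle when $F_j$ has even characteristic and the constant-row table when $F_j$ has odd characteristic---must be matched against the two ways a column sum can vanish. I would need to confirm that the even-characteristic array separates the entries within each column (immediate from the definition of a Latin rectangle) and that the odd-characteristic array really satisfies condition (ii); granting this, the case analysis closes and $f$ is proper, so $\chi(Reg(\Gamma(R)))\le(|F_2|-1)\cdots(|F_n|-1)$. Combining this with the clique bound and the general inequality $\omega\le\chi$ yields $\chi=\omega=(|F_2|-1)\cdots(|F_n|-1)$.
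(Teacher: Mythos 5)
Your proposal is correct and follows essentially the same route as the paper: the same coloring built from the Latin-sum arrays of Lemma~\ref{latin-sum array for regs} and the same clique $\{c\}\times(F_2\setminus\{0\})\times\cdots\times(F_n\setminus\{0\})$ (the paper takes $c=1$). The only difference is that you spell out the properness case analysis that the paper dismisses as ``similar to the proof of Lemma~\ref{chromatic-no-of-reduced-rings}.''
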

\begin{proof}
{For $i=2,\ldots,n$, let $L^{F_1^*,F_i^*}$ be the $(|F_1|-1)\times(|F_i|-1)$ Latin-sum array over $F_i^*=F_i\setminus \{0\}$ deduced from Lemma \ref{latin-sum array for regs}.  We define the following map $g$ on $F_1\setminus\{0\}\times\cdots\times F_n\setminus\{0\}$ as follows: $$g\big((x_1,\ldots,x_n)\big)=\big(L^{F_1^*,F_2^*}_{x_1,x_2},\ldots,L^{F_1^*,F_n^*}_{x_1,x_{n}}\big),$$
where $L^{F_1^*,F_k^*}_{x_i,x_j}$ is the  $(x_i,x_j)$-entry of $L^{F_1^*,F_k^*}$. Similar to the proof of Lemma \ref{chromatic-no-of-reduced-rings}, one may prove that $g$ is a proper vertex coloring for $Reg(\Gamma(F_1\times\cdots\times F_n))$. On the other hand, $\{1\}\times F_2\setminus\{0\}\times\cdots\times F_n\setminus\{0\}$ is a clique for $Reg(\Gamma(F_1\times\cdots\times F_n))$. Thus, $\chi\big(Reg(\Gamma(F_1\times\cdots\times F_n))\big)=(|F_2|-1)\cdots(|F_n|-1)$. The proof is complete.}
\end{proof}
Now, we are in a position to prove the following result.
\begin{thm}
Let $R$ be a finite ring and $\mathfrak{m}$ be a maximal ideal of $R$ of maximum size. If $\frac{R}{\mathfrak{m}}$ 
has characteristic $2$, then
$$\chi\Big(Reg\big(\Gamma(R)\big)\Big)=\omega\Big(Reg\big(\Gamma(R)\big)\Big)=\frac{|\,{\rm Reg}(R)|}{|\frac{R}{\mathfrak{m}}|-1}.$$
\end{thm}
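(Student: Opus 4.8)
The plan is to sandwich all four quantities between an upper bound on $\chi$ coming from the reduction lemmas and a lower bound on $\omega$ coming from an explicit clique, and then to check that these two bounds coincide. Since $R$ is finite it decomposes as a product of local rings, so $\frac{R}{J(R)}\cong F_1\times\cdots\times F_k$ with the $F_i$ the residue fields; I would order them so that $|F_1|\leq\cdots\leq|F_k|$. The pulled-back maximal ideal coming from the $i$-th factor has size $|R|/|F_i|$, so a maximal ideal of maximum size corresponds to a residue field of minimum size; thus $\frac{R}{\mathfrak{m}}\cong F_1$, and the hypothesis says $\mathrm{char}(F_1)=2$, which is exactly the hypothesis of Lemma \ref{chromatic-no-of-reduced-rings for regs} (fields of equal size share a characteristic, so the choice among minimal residue fields is immaterial).

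Next I would record the arithmetic identity that makes the target transparent. In each local factor $R_i$ with maximal ideal $\mathfrak{m}_i$ one has $|U(R_i)|=|R_i|-|\mathfrak{m}_i|=|\mathfrak{m}_i|(|F_i|-1)$, and since by Lemma \ref{zerodivisorsofzero-dimensionals} $\mathrm{Reg}(R)=U(R)=\prod_iU(R_i)$ while $J(R)=\prod_i\mathfrak{m}_i$, multiplying over $i$ gives $|\mathrm{Reg}(R)|=|J(R)|\prod_{i=1}^k(|F_i|-1)$. Dividing by $|F_1|-1=|\frac{R}{\mathfrak{m}}|-1$ yields
$$\frac{|\mathrm{Reg}(R)|}{|\frac{R}{\mathfrak{m}}|-1}=|J(R)|\prod_{i=2}^k(|F_i|-1),$$
which is the number I must reach from both sides. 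The upper bound is then immediate: Lemma \ref{coloring-reduction-to-reduced-rings for regs} gives $\chi(\mathrm{Reg}(\Gamma(R)))\leq|J(R)|\,\chi(\mathrm{Reg}(\Gamma(\frac{R}{J(R)})))$, and Lemma \ref{chromatic-no-of-reduced-rings for regs} (applicable since $\mathrm{char}(F_1)=2$) evaluates $\chi(\mathrm{Reg}(\Gamma(F_1\times\cdots\times F_k)))=(|F_2|-1)\cdots(|F_k|-1)$, whose product with $|J(R)|$ is exactly the displayed quantity. (When $k=1$, i.e.\ $R$ is local, $\mathrm{Reg}(\Gamma(F_1))$ is edgeless in characteristic $2$, so $\chi=1$ and the same bound $\chi(\mathrm{Reg}(\Gamma(R)))\leq|J(R)|$ holds.)

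For the lower bound I would exhibit a clique of the required size: the full preimage in $R$ of the clique $C=\{1\}\times(F_2\setminus\{0\})\times\cdots\times(F_k\setminus\{0\})$ of $\mathrm{Reg}(\Gamma(\frac{R}{J(R)}))$ used in Lemma \ref{chromatic-no-of-reduced-rings for regs}. Its size is $|J(R)|\cdot|C|=|J(R)|\prod_{i=2}^k(|F_i|-1)$, matching the target, and every element of the preimage is a unit of $R$ (an element is a unit iff its image in $\frac{R}{J(R)}$ is), hence lies in $\mathrm{Reg}(R)$. To see it is a clique, take two distinct elements $a,b$. If they lie in different $J(R)$-cosets their images are distinct vertices of $C$, hence adjacent, so $(a+b)+J(R)\in Z(\frac{R}{J(R)})$ and Lemma \ref{zerodivisorsofreducedsareshifted} gives $a+b\in Z(R)$. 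If instead $a\equiv b\pmod{J(R)}$, then $a+b\equiv 2a\pmod{J(R)}$, and since the common image has first coordinate $1\in F_1$ with $\mathrm{char}(F_1)=2$, the element $(a+b)+J(R)$ has first coordinate $2\cdot 1=0$, so it lies in the maximal ideal $\{0\}\times F_2\times\cdots\times F_k$ of $\frac{R}{J(R)}$; again $(a+b)+J(R)\in Z(\frac{R}{J(R)})$ and $a+b\in Z(R)$ by Lemma \ref{zerodivisorsofreducedsareshifted}. Thus the preimage is a clique, giving $\omega(\mathrm{Reg}(\Gamma(R)))\geq|J(R)|\prod_{i=2}^k(|F_i|-1)$.

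Combining the two bounds with the trivial inequality $\omega\leq\chi$ forces $\omega(\mathrm{Reg}(\Gamma(R)))=\chi(\mathrm{Reg}(\Gamma(R)))=\frac{|\mathrm{Reg}(R)|}{|\frac{R}{\mathfrak{m}}|-1}$. I expect the main obstacle to be precisely the same-coset adjacency step in the clique argument: one must use the characteristic-$2$ hypothesis on the \emph{smallest} residue field to collapse $2a$ into a maximal ideal, and this is exactly where the even-characteristic assumption on the maximal ideal of maximum size is essential, since in odd characteristic the coset would not collapse and the preimage would fail to be a clique. The remaining work is the routine bookkeeping needed to confirm that the upper and lower bounds land on the identical expression $|J(R)|\prod_{i=2}^k(|F_i|-1)$.
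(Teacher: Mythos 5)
Your proposal is correct and follows essentially the same route as the paper: reduce to $\frac{R}{J(R)}$ via Lemma \ref{coloring-reduction-to-reduced-rings for regs}, evaluate the chromatic number of the reduced quotient via Lemma \ref{chromatic-no-of-reduced-rings for regs}, exhibit the $J(R)$-preimage of the clique $\{1\}\times(F_2\setminus\{0\})\times\cdots\times(F_k\setminus\{0\})$ as a clique of matching size, and check that the two bounds agree via the identity $|\mathrm{Reg}(R)|=|J(R)|\prod_i(|F_i|-1)$. The only cosmetic difference is that the paper treats the local case as a separate first step while you fold it into a parenthetical, and you spell out the same-coset adjacency verification that the paper leaves implicit.
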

\begin{proof}
{First assume that $\mathfrak{m}$ is the unique maximal ideal of $R$. Since $\frac{R}{\mathfrak{m}}$  has characteristic $2$, we deduce that $2\in \mathfrak{m}$. Thus, $Reg\big(\Gamma(R)\big)$ is a disjoint union of complete graphs with $|\mathfrak{m}|$ vertices. Hence $\chi\Big(Reg\big(\Gamma(R)\big)\Big)=\omega\Big(Reg\big(\Gamma(R)\big)\Big)=|\mathfrak{m}|$. Since 
$Reg(R)=R\setminus \mathfrak{m}$, the assertion follows.  Therefore, one may assume that $|{\rm {Max}}(R)|\geq 2$. Let ${\rm {Max}}(R)=\{\mathfrak{m}_1,\ldots,\mathfrak{m}_n\}$, where $n\geq 2$ and $\mathfrak{m}_1=\mathfrak{m}$. Since  $\mathfrak{m}$ has the maximum size and $\frac{R}{\mathfrak{m}}$ has characteristic $2$, by Chinese Reminder Theorem (see \cite[Proposition 1.10]{ati}) and Lemma \ref{chromatic-no-of-reduced-rings for regs}, we obtain  
$$\chi\Big(Reg\big(\Gamma(\frac{R}{J(R)})\big)\Big)=(|\frac{R}{\mathfrak{m}_2}|-1)\cdots(|\frac{R}{\mathfrak{m}_n}|-1).$$
Moreover,  Lemma \ref{zerodivisorsofreducedsareshifted} implies that $|Reg(R)|=\big|J(R)\big|\big|Reg(\frac{R}{J(R)})\big|$. Hence, by Chinese Reminder Theorem we obtain  
$$|Reg(R)|=|J(R)|(|\frac{R}{\mathfrak{m}}|-1)(|\frac{R}{\mathfrak{m}_2}|-1)\cdots(|\frac{R}{\mathfrak{m}_n}|-1).$$
Therefore, we conclude that
$$|J(R)|\chi\Big(Reg\big(\Gamma(\frac{R}{J(R)})\big)\Big)=\frac{|\,{\rm Reg}(R)|}{|\frac{R}{\mathfrak{m}}|-1}.$$
On the other hand, by Lemma \ref{zerodivisorsofreducedsareshifted} and Lemma \ref{chromatic-no-of-reduced-rings for regs}, we deduce that
$$\omega\Big(Reg\big(\Gamma(R)\big)\Big)=|J(R)|\omega\Big(Reg\big(\Gamma(\frac{R}{J(R)})\big)\Big)=\frac{|\,{\rm Reg}(R)|}{|\frac{R}{\mathfrak{m}}|-1}.$$
Now, Lemma \ref{coloring-reduction-to-reduced-rings for regs} completes the proof.}
\end{proof}

\noindent{\bf Acknowledgements.} The authors are indebted to the School of Mathematics, Institute for
Research in Fundamental Sciences, (IPM), for support. The research
of the second author was in part supported by a grant from IPM (No.
92050212).

{}

\end{document}